\newtheorem{theorem}{Theorem}
\newtheorem{lemma}[theorem]{Lemma}
\newtheorem{corollary}[theorem]{Corollary}
\newtheorem{question}{Question}
\DeclareMathOperator\iw{iw}
\DeclareMathOperator\id{id}
\DeclareMathOperator\Gal{Gal}
\DeclareMathOperator\width{width}
\title{Computing the Size of Intervals in the Weak Bruhat Order}
\author{Joshua Cooper and Anna Kirkpatrick \\ Mathematics Department, University of South Carolina \\ 1523 Greene St., Columbia SC 29208}
\begin{document}

\maketitle


\begin{abstract}
The {\em weak Bruhat order} on \( { \mathcal S }_n\) is the partial order \(\prec\) so that \(\sigma \prec \tau\) whenever the set of inversions of \(\sigma\) is a subset of the set of inversions of \(\tau\).  We investigate the time complexity of computing the size of
intervals with respect to \(\prec\). Using relationships between two-dimensional posets and the
weak Bruhat order, we show that the size of the interval \( [ \sigma_1
, \sigma_2 ]\) can be computed in polynomial time whenever \(
\sigma_1^{-1} \sigma_2\) has bounded width (length of its longest decreasing subsequence) or bounded intrinsic width (maximum width of any non-monotone permutation in its block decomposition).  Since permutations of intrinsic width \(1\) are precisely the {\em separable} permutations, this greatly extends a result of Wei.
Additionally, we show that, for large \(
n\), all but a vanishing fraction of permutations
\( \sigma\) in \( { \mathcal S }_n\) give rise to intervals \( [ \id ,
\sigma ]\) whose sizes can be computed with a sub-exponential time
algorithm. The general question of the difficulty of computing the size
of arbitrary intervals remains open.
\end{abstract}

\section{Introduction and Definitions }
A {\em permutation} of \( n\) is a bijective function from \( [ n ] = \{
1 , 2 , \ldots , n \}\) onto itself; we write \( {\mathcal S }_n\) for
the set of all permutations of \( n\). We denote the composition of two permutations,
\( f\) and \( g\), by \( f g\), i.e., \( fg ( x ) = f ( g ( x
) )\). A permutation \( \sigma\) is sometimes identified with the string
\( \sigma ( 1 ) \sigma ( 2 ) \ldots \sigma ( n )\), its expression in
so-called {\em one-line notation}.
The {\em width} of a permutation \( \sigma\) is the length of its longest
decreasing subsequence, denoted \( \width ( \sigma )\).
 A {\em transposition} reversing \( k\) and \( l\) in \([n]\) is a permutation \(
\sigma\) with \( \sigma ( k ) = l\), \( \sigma ( l ) = k\), and \(
\sigma ( i ) = i\) for \( i \not \in \{ l , k \}\). If \( |k-l| = 1\), then we say that \(
\sigma\) is an {\em adjacent transposition}.

A {\em poset of size \( n\)} is a pair \( P = ( S , \prec )\) where \( S\) is
a set of cardinality \( n\), and \( \prec\) is a partial order
relation, that is, a binary relation which is reflexive, transitive,
and antisymmetric. If \( p \prec
q\) or \( q \prec p\), then we say that \( p\) and \( q\) are
{\em comparable,} and {\em incomparable} otherwise.  
Given a poset \( P = ( S , \prec )\), a {\em linear extension} of \( P\) is a
total order \( \prec^\prime\) on \( S\) such that, for all \( p , q \in
S\), \( p \prec q\) implies \( p \prec^\prime q\). Equivalently, \(
\prec^\prime\) is a total order with \( \prec \subseteq
\prec^\prime\) as relations. The set of all linear extensions of \( P\) is denoted
\( { \mathcal L } ( P )\). An {\em interval} \( [ p , q ]\) of a poset \(
P = ( S , \prec )\) is the set \( \{ r \in S | p \prec r \prec q \}\).
A {\em chain} \( C \subseteq S\) is a subset of pairwise comparable elements, and an
{\em antichain} \( A \subseteq S\) is a set of pairwise incomparable
elements. The {\em width} of a poset \( P\) is the cardinality of its
largest antichain.

Given a poset \( P = ( S , \prec )\), a collection of linear extensions \(
{ \mathcal R } = \{ \prec_1 , \prec_2 , \ldots , \prec_k \}\) is called
a {\em realizer} of \( P\) if \( \prec = \bigcap_{ i = 1 }^k \prec_k\),
where each relation \( \prec_i\) is interpreted as a set of ordered
pairs. Equivalently, \( { \mathcal
R }\) is a realizer of \( P\) if, for all \( p , q \in S\), \( p \prec
q\) if and only if \( p \prec_i q\) for all \( 1 \leq i \leq k\). A
realizer is said to be {\em minimal} if it has the smallest possible
cardinality among all realizers of \( P\). The {\em dimension} of \( P\)
is the cardinality of any minimal realizer.

We now define the {\em weak Bruhat order} on permutations of \( n\), an
important structure in algebraic combinatorics and elsewhere.
Given two permutations \( \sigma_1\) and \( \sigma_2\), we have the
covering relation \( \sigma_1 \lessdot \sigma_2\) if and only if there
is an adjacent transposition \( \tau\) reversing \( k\) and \( l\) with
\( \sigma_1 = \sigma_2 \tau\), \( k < l\), and \( \sigma_1 ( k ) <
\sigma_1 ( l )\). The reflexive and transitive closure of \( \lessdot\)
gives the partial ordering relation for the weak Bruhat order. The weak
Bruhat order on \( { \mathcal S }_3\) is depicted in Figure
\ref{Bruhat_order_example}.

\begin{figure}[htb]
\centering
\includegraphics[scale = 0.5] {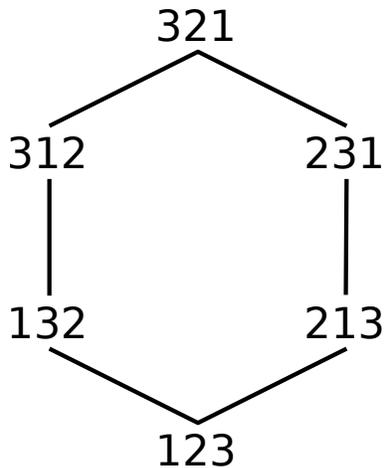}
\caption{The weak Bruhat order on \( { \mathcal S }_3\)}
\label{Bruhat_order_example}
\end{figure}

The present work investigates computational complexity questions
regarding the structures we have just defined.
We refer to standard texts in computational complexity theory to precisely define hardness of decision and functional complexity problems (e.g., \cite{Arora}).  Roughly, a decision problem is in \({\bf P}\) if the answer can be computed in polynomial time (in the size of the input instance); it is in \({\bf NP }\) if the answer can be certified in polynomial time; it is \({\bf NP }\)-hard if every problem in \({\bf NP }\) can be reduced to it in polynomial time (i.e., it is at least as hard as all problems in \({\bf NP }\)).  Similarly, a function problem (a computational problem whose output is an integer instead of only a single bit) is in \({\bf FP}\) if the answer can be computed in polynomial time (in the size of the input instance); it is in \(\#{\bf P}\) if it consists of computing the number of correct solutions to a problem in \({\bf NP }\); it is \(\#{\bf P}\)-hard if the problem of computing the number of correct solutions to any problem in \({\bf NP }\) can be reduced to this problem in polynomial time.


These two ideas, the weak Bruhat order on \( { \mathcal S }_n\) and
poset linear extensions, give rise to many computational complexity
questions. We address two in particular. First, how hard is it to
compute interval sizes in the weak Bruhat order? In particular,
 given permutations \( \sigma_1\) and \( \sigma_2\) of \( n\), what is
the computational complexity of computing the size of the interval \( [
\sigma_1 , \sigma_2 ]\) in the weak Bruhat order on \( { \mathcal S
}_n\)? Wei gives an explicit formula for the case when \( \sigma_2
\sigma_1^{ - 1 }\) is a separable permutation \cite{Wei}, proving that
the size of such intervals can be computed in polynomial time. Second,
given a poset, can we compute the number of linear extensions? In
general, this task is known to be \( \# {\bf P}\)-hard
\cite{Brightwell}. However, several restricted cases of this task are
known to be possible in polynomial time. For example, if \( P\) has
bounded width or bounded intrinsic width, then the number of linear
extensions of \( P\) can be computed in polynomial time \cite{Cooper}.
We apply results on linear extension enumeration to the computation of the sizes of intervals in the weak Bruhat order.
In particular, we prove the polynomial time
computability of a much larger set of intervals than the set addressed by
Wei \cite{Wei}. Most generally, we are able to calculate in polynomial
time the size of \( [ \sigma_1 , \sigma_2 ]\) whenever \( \sigma_1^{ -
1 } \sigma_2\) has bounded intrinsic width.

One crucial tool we use is a bijection between
permutations of \( n\) and two-dimensional posets with ground set \( [
n ]\), where the labeling provided by the ground set is a linear
extension (which Bj\"{o}rner and Wachs in \cite{Bjorner} call a
{\em natural labeling}). Let \( { \mathcal P }_n\) be the set of all
such posets. We define \( \Phi : { \mathcal S }_n \rightarrow { \mathcal P }_n\) as
follows. Let \( \sigma\) be a permutation of \( n\), and consider \(
\sigma\) written in one-line notation. Interpret \( \sigma\) as one
chain in a realizer, reading left-to-right. For the other chain, take \( [ n ]\) with the
usual \( \leq\) ordering. This realizer yields a two-dimensional poset,
\( \Phi ( \sigma )\). As an example, consider the permutation \( \sigma
= 24135\). To construct \( \Phi ( \sigma )\), we consider the realizer
composed of the two chains \( 1 \prec 2 \prec 3 \prec 4 \prec 5\) and
\( 2 \prec 4 \prec 1 \prec 3 \prec 5\). Their intersection gives the
poset whose Hasse diagram is pictured in Figure \ref{basic_example} (a). Note that this
poset can also be obtained from the graph of \( \sigma\) as a function,
as depicted in Figure 
\ref{basic_example} (b).

\begin{figure}[htb]
\centering
\includegraphics[scale = 0.5] {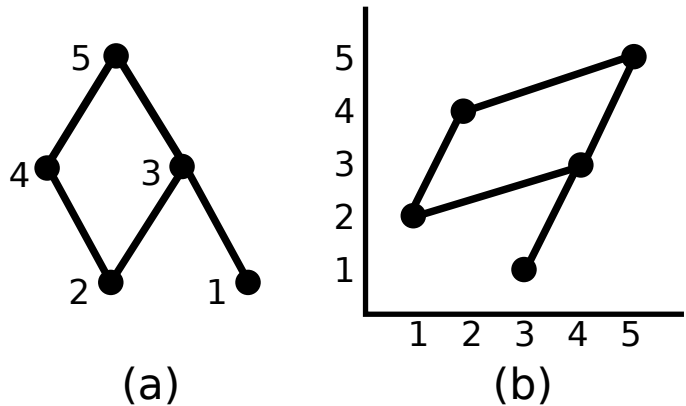}
\caption{A two-dimensional poset and its corresponding permutation. }
\label{basic_example}
\end{figure}

Some other key relationships between posets and permutations are discussed 
at the beginning of Section \ref{bounded_width_section}.
This material builds toward the main result of Section
\ref{bounded_width_section}, that \( | [ \sigma_1 , \sigma_2 ] |\) is
polynomial-time computable whenever \( \sigma_1^{-1} \sigma_2\) has
bounded width. Section \ref{intrinsic_width_section} defines
intrinsic width in order to generalize the argument of Section
\ref{bounded_width_section} to include the more general case where \( \sigma_1^{-1}
\sigma_2\) has bounded intrinsic width. Finally, Section
\ref{random_permutations_section} applies the ideas of previous
sections to random permutations.

A central question that remains is the following.

\begin{question} What is the computational complexity of computing the number of linear extensions of a dimension-two poset, or, equivalently, of the size of intervals in the weak Bruhat order?
\end{question}

\section{Permutations of Bounded Width}
\label{bounded_width_section}

We make significant use of a result of Bj{\"o}rner and Wachs
relating the linear extensions of a two-dimensional poset to an
interval in the weak Bruhat order.

\begin{theorem}[Bj{\"o}rner, Wachs \cite{Bjorner}]
\label{Bruhat_intervals_are_poset_extensions}
Let \( U \subseteq { \mathcal S }_n\). Then \( U\) is an interval in
the weak Bruhat order if and only if there is a poset \( P\) with
ground set \( [ n ]\) such that \( U\) consists exactly of the linear
extensions of \( P\).
\end{theorem}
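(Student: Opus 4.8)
The plan is to prove the two implications separately, using throughout the description of the weak Bruhat order by inversion-set containment: writing $\operatorname{Inv}(\sigma)$ for the inversion set of $\sigma$, one has $\sigma \prec \tau$ iff $\operatorname{Inv}(\sigma) \subseteq \operatorname{Inv}(\tau)$, and each covering step adds exactly one inversion. For ``interval $\Rightarrow$ poset'', suppose $U = [\sigma, \tau]$ with $\sigma \prec \tau$. I would define a relation on $[n]$ by declaring $a \prec_P b$ exactly when $a$ precedes $b$ in the one-line notation of both $\sigma$ and $\tau$. This is a partial order: ``precedes in $\sigma$'' and ``precedes in $\tau$'' are transitive and antisymmetric (being total orders), and intersecting two relations with these properties again has them; indeed $P$ is the two-dimensional poset realized by the chains obtained by reading $\sigma$ and $\tau$ left to right, in the spirit of the map $\Phi$. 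It then suffices to identify $\mathcal{L}(P)$ with $[\sigma, \tau]$: a permutation $\rho$ is a linear extension of $P$ precisely when it respects every relation $a \prec_P b$, and, since $\operatorname{Inv}(\sigma) \subseteq \operatorname{Inv}(\tau)$, translating these constraints pair by pair shows that the pairs on which $\sigma$ and $\tau$ agree force exactly the conditions $\operatorname{Inv}(\sigma) \subseteq \operatorname{Inv}(\rho)$ and $\operatorname{Inv}(\rho) \subseteq \operatorname{Inv}(\tau)$, i.e.\ $\sigma \prec \rho \prec \tau$. Hence $\mathcal{L}(P) = U$.

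For ``poset $\Rightarrow$ interval'', suppose $U = \mathcal{L}(P)$. I would single out two linear extensions built greedily from the front: $\sigma$ by repeatedly appending the smallest minimal element of what remains of $P$, and $\tau$ by repeatedly appending the largest minimal element of what remains of $P$. Both are linear extensions, with $\sigma$ intuitively ``as sorted as possible'' and $\tau$ ``as unsorted as possible''. The goal is then to prove $\sigma \prec \rho \prec \tau$ for every $\rho \in \mathcal{L}(P)$ — equivalently, that every inversion of $\sigma$ and every non-inversion of $\tau$ is forced by $P$ — together with the converse, that every $\rho$ satisfying $\operatorname{Inv}(\sigma) \subseteq \operatorname{Inv}(\rho) \subseteq \operatorname{Inv}(\tau)$ is again a linear extension of $P$. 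Combining these yields $U = [\sigma, \tau]$.

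The main obstacle is this backward direction: showing that $\sigma$ and $\tau$ really are the weak-order minimum and maximum among linear extensions of $P$, and that nothing lies strictly between them that is not a linear extension. Equivalently, one must show the two chains read off from $\sigma$ and $\tau$ form a realizer of $P$ — that every pair left incomparable by $P$ is genuinely reversed between $\sigma$ and $\tau$ — so that $P$ is two-dimensional with $\sigma$, $\tau$ as its extreme linear extensions. I would approach this by tracking the greedy constructions and inducting on the structure of $P$, and I expect it to be the technical heart of the argument, the point where the correspondence between $\mathcal{S}_n$ and the family $\mathcal{P}_n$ of two-dimensional posets does the real work; the inversion bookkeeping from the forward direction then completes the proof.
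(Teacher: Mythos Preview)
The paper does not give its own proof of this theorem: it is quoted from Bj\"orner--Wachs and used as a black box, so there is no argument in the paper to compare yours against.  That said, your proposal deserves a substantive comment, because the backward direction as you have set it up cannot be completed.

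Your forward implication is fine: intersecting the two total orders read off from \(\sigma\) and \(\tau\) yields a (two\nobreakdash-dimensional) poset \(P\), and the inversion bookkeeping you describe correctly identifies \(\mathcal L(P)\) with \([\sigma,\tau]\).  The problem is the converse.  You propose, for an \emph{arbitrary} poset \(P\) on \([n]\), to build greedy extremes \(\sigma,\tau\) and then prove that they form a realizer of \(P\), concluding in particular that \(P\) has dimension at most two.  That conclusion is false in general, so no induction ``on the structure of \(P\)'' can establish it.  Concretely, take the standard three\nobreakdash-dimensional poset on \([6]\) with relations \(1\prec 5,\ 1\prec 6,\ 2\prec 4,\ 2\prec 6,\ 3\prec 4,\ 3\prec 5\) (naturally labeled).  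The linear extensions \(321654\) and \(324165\) are incomparable in weak order, so \(\mathcal L(P)\) has no maximum and is not an interval; your greedy \(\tau\) is \(324165\), and it does not dominate \(321654\).

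What this reveals is that the theorem, as transcribed in the paper, is stated a bit loosely: the ``if'' direction is only true when \(P\) has dimension at most two (equivalently, when \(P=\Phi(\pi)\) for some \(\pi\)), which is exactly the class \(\mathcal P_n\) the paper actually works with.  With that hypothesis in hand your plan is salvageable---indeed, once \(P\) is assumed two\nobreakdash-dimensional one can take any realizer \(\{\sigma,\tau\}\) (not necessarily the greedy one) and run your forward-direction computation in reverse.  But as written, the step you flag as ``the technical heart'' is not merely hard; it is the place where the argument would have to prove something false.
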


The following lemma is implicit in the work of Bj{\"o}rner and Wachs.
It is stated separately here to lend clarity to subsequent proofs.

\begin{lemma}
\label{Bruhat_intervals_from_identity}
Let \( \sigma \in { \mathcal S }_n\). Consider \( [ \id , \sigma ]\) as
an interval in the weak Bruhat order. Then, \( { \mathcal L } ( \Phi (
\sigma ) ) = [ \id , \sigma ] \), where the elements of \( [ \id, \sigma
]\) are interpreted as chains when read in one-line notation.
\end{lemma}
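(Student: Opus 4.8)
The plan is to prove the two stated sets of total orders on $[n]$ coincide by identifying each of them, separately, with the same condition on inversion sets. Since $\tau \mapsto \bigl(\tau(1)\prec\tau(2)\prec\cdots\prec\tau(n)\bigr)$ is a bijection from $\mathcal{S}_n$ onto the total orders on $[n]$, it suffices to show that, for $\tau\in\mathcal{S}_n$, the chain $\tau(1)\prec\cdots\prec\tau(n)$ is a linear extension of $\Phi(\sigma)$ if and only if $\tau\in[\id,\sigma]$ in the weak Bruhat order. First I would unwind $\Phi(\sigma)$: by definition it is the intersection of the chain $\sigma(1)\prec\cdots\prec\sigma(n)$ with the chain $1\prec 2\prec\cdots\prec n$, so for $a,b\in[n]$ we have $a\prec_{\Phi(\sigma)}b$ exactly when $a<b$ as integers \emph{and} $a$ precedes $b$ in the one-line notation of $\sigma$, i.e. $\sigma^{-1}(a)<\sigma^{-1}(b)$. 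Hence, writing a pair as $\{a,b\}$ with $a<b$, it is incomparable in $\Phi(\sigma)$ precisely when $\sigma^{-1}(a)>\sigma^{-1}(b)$ — that is, precisely when $\{a,b\}$ is an inversion of $\sigma$. (As a useful sanity check, this already shows that the antichains of $\Phi(\sigma)$ are exactly the decreasing subsequences of $\sigma$, so $\width(\Phi(\sigma))=\width(\sigma)$, a fact presumably also wanted elsewhere.)

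Next I would characterize $\mathcal{L}(\Phi(\sigma))$. A total order presented as $\tau(1)\prec\cdots\prec\tau(n)$ is a linear extension of $\Phi(\sigma)$ iff it contains every comparability of $\Phi(\sigma)$, i.e. iff for all $a<b$ with $\sigma^{-1}(a)<\sigma^{-1}(b)$ one also has $\tau^{-1}(a)<\tau^{-1}(b)$. Taking the contrapositive pair-by-pair, this is equivalent to: for all $a<b$, $\tau^{-1}(a)>\tau^{-1}(b)$ implies $\sigma^{-1}(a)>\sigma^{-1}(b)$ — in words, every inversion of $\tau$ is an inversion of $\sigma$. Thus $\mathcal{L}(\Phi(\sigma))$ is exactly the set of permutations whose set of inversions is contained in that of $\sigma$.

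Finally I would invoke the description of the weak Bruhat order recalled in Section~1: $\tau\preceq\sigma$ iff the set of inversions of $\tau$ is contained in that of $\sigma$, and $\id$ is the global minimum because its inversion set is empty. Consequently $[\id,\sigma]$ is precisely $\{\tau : \text{the inversions of }\tau \text{ form a subset of the inversions of }\sigma\}$, which by the previous paragraph equals $\mathcal{L}(\Phi(\sigma))$, finishing the proof. I expect the only real obstacle to be bookkeeping with conventions: keeping straight which of the two chains defines $\Phi(\sigma)$ and in which direction, and making sure the ``inversions'' appearing in the weak-order description and in the incomparabilities of $\Phi(\sigma)$ are the same combinatorial data (pairs of \emph{values} occurring in decreasing order). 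If one prefers to start from the covering-relation definition of the weak Bruhat order rather than the inversion-set description, one additionally needs the standard lemma that those two descriptions agree; but since Section~1 already presents the order via inversion sets, no such detour is required.
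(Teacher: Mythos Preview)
Your argument is correct. You unwind $\Phi(\sigma)$ correctly from its two-chain realizer, obtain that the linear extensions of $\Phi(\sigma)$ are exactly those $\tau$ whose inversion set is contained in that of $\sigma$, and then identify this with $[\id,\sigma]$ via the inversion-set description of the weak order. Each step is routine and the bookkeeping is handled carefully.

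As for comparison with the paper: there is essentially nothing to compare against. The paper does not prove this lemma; it simply states that the result ``is implicit in the work of Bj\"orner and Wachs'' and illustrates it with an example on $\mathcal{S}_3$. Your write-up therefore supplies precisely the direct verification that the paper omits. One small remark on your closing sentence: strictly speaking, in the body of Section~1 the weak order is introduced via the covering relation $\sigma_1 \lessdot \sigma_2$ (adjacent transpositions increasing the number of inversions), and the inversion-set characterization appears only in the abstract. So if you want the proof to be self-contained relative to Section~1, you do need the standard equivalence between the two descriptions; but this is a well-known fact, and since the abstract states the inversion-set version explicitly, invoking it is entirely reasonable.
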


To understand this lemma, it is helpful to look at an example. The weak
Bruhat order 
on
 \( { \mathcal S }_n\) is shown in Figure \ref{Bruhat_order_example}.
Let \( \sigma = 3 1 2\). Then, \( \Phi ( \sigma )\) has linear
extensions \( 1 \prec 2 \prec 3\), \( 1 \prec 3 \prec 2\), and \( 3
\prec 1 \prec 2\). Notice that these are exactly the elements of \( [
\id, \sigma ]\).

Before we begin considering the complexity of interval size
computations, we present a simple lemma relating permutation width and
poset width.

\begin{lemma}
\label{width}
Let \( \sigma \in { \mathcal S }_n\). Then the width of \( \sigma\) is
equal to the width of \( \Phi ( \sigma )\).
\end{lemma}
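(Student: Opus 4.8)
The plan is to first make the partial order of \( \Phi(\sigma) \) completely explicit, and then set up a cardinality-preserving bijection between antichains of \( \Phi(\sigma) \) and decreasing subsequences of \( \sigma \). Since \( \Phi(\sigma) \) is the intersection of the chain \( 1 \prec 2 \prec \cdots \prec n \) with the chain obtained by reading \( \sigma(1)\sigma(2)\cdots\sigma(n) \) left to right, an element \( i \) precedes an element \( j \) in the first chain exactly when \( i < j \) as integers, and precedes \( j \) in the second chain exactly when \( i \) occurs before \( j \) in one-line notation, i.e.\ when \( \sigma^{-1}(i) < \sigma^{-1}(j) \). Hence \( i \prec j \) in \( \Phi(\sigma) \) if and only if \( i < j \) and \( \sigma^{-1}(i) < \sigma^{-1}(j) \); consequently \( i \) and \( j \) (say with \( i < j \)) are incomparable in \( \Phi(\sigma) \) exactly when \( \sigma^{-1}(i) > \sigma^{-1}(j) \).

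Next I would characterize antichains. Take a subset \( A = \{a_1 < a_2 < \cdots < a_k\} \subseteq [n] \), listed in increasing order of value. Because the relation ``\( \sigma^{-1}(\cdot) > \sigma^{-1}(\cdot) \)'' is just the usual order on the integers \( \sigma^{-1}(a_i) \), the elements of \( A \) are pairwise incomparable if and only if \( \sigma^{-1}(a_1) > \sigma^{-1}(a_2) > \cdots > \sigma^{-1}(a_k) \) --- it suffices to check consecutive pairs, since \( > \) is transitive. Reading this the other way, if we relabel the elements of \( A \) in the order in which they appear in one-line notation, their values are strictly decreasing; that is, \( A \) is an antichain of \( \Phi(\sigma) \) precisely when the values of \( A \), taken in order of position, form a decreasing subsequence of \( \sigma \).

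This immediately yields a bijection \( \Psi \) between antichains of \( \Phi(\sigma) \) and decreasing subsequences of \( \sigma \): send an antichain \( A \) to the subsequence of \( \sigma(1)\sigma(2)\cdots\sigma(n) \) supported on the positions \( \{\sigma^{-1}(a) : a \in A\} \). Its inverse sends a decreasing subsequence \( \sigma(p_1) > \cdots > \sigma(p_k) \) (with \( p_1 < \cdots < p_k \)) to the set of values \( \{\sigma(p_1), \ldots, \sigma(p_k)\} \), which by the previous paragraph is an antichain. Since \( \Psi \) preserves cardinality, the largest antichain of \( \Phi(\sigma) \) and the longest decreasing subsequence of \( \sigma \) have the same size, i.e.\ \( \width(\Phi(\sigma)) = \width(\sigma) \).

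I do not expect a genuine obstacle here; the only thing demanding care is the bookkeeping between positions and values (equivalently, between \( \sigma \) and \( \sigma^{-1} \)), and in particular checking that pairwise-incomparable sets correspond to \emph{decreasing}, rather than increasing, subsequences --- which is forced by the fact that comparability in \( \Phi(\sigma) \) requires agreement of both chains. One could alternatively phrase the whole argument in terms of the graph \( \{(i,\sigma(i)) : i \in [n]\} \) under the coordinatewise (dominance) order, where antichains are visibly the ``down-right'' staircases, i.e.\ decreasing subsequences; but the chain-realizer description already gives everything cleanly.
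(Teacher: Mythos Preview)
Your proof is correct and follows essentially the same approach as the paper: both arguments observe that, since \(\Phi(\sigma)\) is realized by the natural order together with the order given by reading \(\sigma\) in one-line notation, a subset is an antichain precisely when its elements appear in opposite orders in the two chains, i.e., when it forms a decreasing subsequence of \(\sigma\). You simply unpack this identification more explicitly via \(\sigma^{-1}\) and spell out the bijection, whereas the paper states it in one line.
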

\begin{proof}
In a two-dimensional poset, a set of elements is an antichain precisely
when the elements appear in opposite orders in the two linear
extensions in its realizer. Since \( \Phi ( \sigma )\) has a realizer
consisting of \( 1 \prec 2 \prec \ldots \prec n\) and \( \sigma ( 1 )
\prec \sigma ( 2 ) \prec \ldots \prec \sigma ( n )\), the antichains in
\( \Phi ( \sigma )\) are exactly the decreasing subsequences of \(
\sigma\). Hence, the width of \( \Phi ( \sigma )\) is equal to the
width of \( \sigma\).
\end{proof}

In order to extend Wei's result on the computability of interval sizes
in the weak Bruhat order, we make use of the connection between
two-dimensional posets and intervals. We are able to handle a larger
collection of intervals because of the following theorem about the
computational complexity of linear extension counting, which was proved
in previous work. This theorem is actually a weaker version of Theorem
9 in \cite{Cooper}. We use of the full result in Section
\ref{intrinsic_width_section} of this paper, where it appears as
Theorem \ref{bounded_intrinsic_width}.

\begin{theorem}[Cooper, Kirkpatrick \cite{Cooper}]
\label{bounded_width}
Let \( P = ( S , \prec )\) be a poset with \( | S | = n\). If the width of
\( P = ( S , \prec ) \) is bounded by a fixed integer \( k\), then the
number of linear extensions of \( P = ( S , \prec )\) can be computed in \(
O ( n^{ \max ( 3 , k ) } )\) time.
\end{theorem}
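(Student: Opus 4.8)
The plan is to reduce linear‑extension counting to a dynamic program over the order ideals of \(P\), exploiting that a width‑\(k\) poset is a union of \(k\) chains. First I would invoke Dilworth's theorem to partition \(S\) into chains \(C_1,\dots,C_m\) with \(m\le \width(P)\le k\); padding with empty chains, assume \(m=k\). Such a decomposition is computable in polynomial time by the standard reduction to bipartite matching (a maximum matching in the comparability bigraph of size \(M\) yields a partition into \(n-M=\width(P)\) chains), costing \(O(n^{5/2})\) with Hopcroft--Karp, comfortably inside our budget. Write each chain in increasing order as \(C_i=(c_{i,1}\prec c_{i,2}\prec\cdots\prec c_{i,\ell_i})\), with \(\sum_i\ell_i=n\).

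The key structural observation is that every order ideal (down‑set) \(I\) of \(P\) meets each chain \(C_i\) in an \emph{initial segment} \(\{c_{i,1},\dots,c_{i,a_i}\}\), simply because \(C_i\) is totally ordered. Hence order ideals are faithfully encoded by tuples \(\vec a=(a_1,\dots,a_k)\) with \(0\le a_i\le\ell_i\) for which the ``box'' \(I(\vec a):=\bigcup_i\{c_{i,1},\dots,c_{i,a_i}\}\) is downward closed; call such \(\vec a\) \emph{valid}. For a valid \(\vec a\), let \(f(\vec a)\) be the number of linear extensions of the sub‑poset of \(P\) induced on \(I(\vec a)\). A maximal element of \(I(\vec a)\) can only be some \(c_{i,a_i}\) with \(a_i>0\), and \(c_{i,a_i}\) is maximal precisely when \(I(\vec a)\setminus\{c_{i,a_i}\}=I(\vec a-e_i)\) is again a down‑set, i.e.\ when \(\vec a-e_i\) is valid. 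Building a linear extension by removing a maximal element to place last therefore gives the recursion
\[
f(\vec a)\;=\;\sum_{\substack{1\le i\le k,\;a_i>0\\ \vec a-e_i\ \text{valid}}} f(\vec a-e_i),\qquad f(\vec 0)=1,
\]
with the desired count equal to \(f(\ell_1,\dots,\ell_k)\). Equivalently, \(f\) counts monotone lattice paths from \(\vec 0\) to \((\ell_1,\dots,\ell_k)\) through the valid tuples: each linear extension, recorded as ``which chain supplied the \(t\)‑th placed element,'' is such a path, and conversely a path determines the extension since within a chain the order is forced.

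For the running time, one precomputes, for every \(x\in S\) and every chain index \(j\), the largest \(p_j(x)\) with \(c_{j,p_j(x)}\prec x\) (and \(p_j(x)=0\) if none); then \(I(\vec a)\) is a down‑set iff \(a_j\ge p_j(c_{i,a_i})\) for all \(i\) with \(a_i>0\) and all \(j\), a test that, maintained incrementally along transitions, is \(O(k)\) per step. Computing the chain decomposition, the transitive closure of the input relation if needed, and the \(p_j(\cdot)\) tables costs \(O(n^3)\); the DP visits \(\prod_i(\ell_i+1)=O(n^k)\) states (for fixed \(k\)) with \(O(k)\) work each. Summing gives \(O(n^{\max(3,k)})\) for fixed \(k\). (This is in the arithmetic model; the integers involved have \(O(n\log n)\) bits, so a bit‑complexity accounting multiplies by a polynomial factor, which one can either absorb or state explicitly.)

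The main obstacle is the correctness argument underpinning the DP, namely that restricting attention to ``rectangular'' ideals \(I(\vec a)\) loses nothing — which rests on the initial‑segment property of chains — and that the recursion is an exact bijection: removing \(c_{i,a_i}\) when maximal yields a valid subproblem, and every linear extension corresponds to exactly one sequence of such removals (equivalently, one monotone path through the valid‑tuple graph). Once that bijection is nailed down, the remaining work is the bookkeeping above; the only other technical care needed is making the down‑set and transition validity tests cheap enough (via the \(p_j(\cdot)\) precomputation) to keep the per‑state cost \(O(k)\) rather than \(\Omega(n)\).
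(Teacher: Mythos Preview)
The paper does not actually prove this theorem; it is quoted as Theorem~9 of the authors' earlier paper \cite{Cooper} and used as a black box here. Your argument---Dilworth chain decomposition, encoding order ideals as $k$-tuples of chain prefixes, and the ``remove a maximal element'' dynamic program---is the standard proof of this result and is correct. Indeed, the explicit $O(k^2 n^{\max(3,k)})$ bound the paper records in Corollary~\ref{bounded_width_modified} is exactly what your analysis yields once one accounts for the $O(k)$ validity check for each of the $k$ candidate transitions at a state (your ``$O(k)$ per step'' is really $O(k)$ per \emph{transition}, hence $O(k^2)$ per state), so your approach almost certainly coincides with the cited one.

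One minor remark: to keep the state enumeration clean you should traverse only valid tuples (e.g., process in increasing order of $\sum_i a_i$ and push $f(\vec a)$ forward to each valid $\vec a+e_i$), rather than iterating over the full box $\prod_i\{0,\dots,\ell_i\}$ and testing validity; either way the bound is the same for fixed $k$, but the forward formulation makes the ``incremental'' validity test you allude to concrete.
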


As stated, Theorem \ref{bounded_width} assumes that \( k\) is fixed,
but we in fact need to understand the dependence of the constant in
the big O on \( k\). By retracing the proof of this theorem
and keeping track of the constants, we obtain the following
corollary.
\begin{corollary}
\label{bounded_width_modified}
Let \( P = ( S , \prec ) \) be a poset with \( | P | = n\) and width less
than or equal to \( k\). The number of linear extensions of \( P \) can be
computed in \( O ( k^2 n^{ \max ( 3 , k ) } )\) time, where the constant is
independent of both \( n\) and \( k\).
\end{corollary}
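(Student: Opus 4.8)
The plan is to prove Corollary~\ref{bounded_width_modified} by revisiting the dynamic-programming algorithm underlying Theorem~\ref{bounded_width} and tracking how the hidden constant depends on $k$. Recall the standard approach (going back to the antichain-based linear-extension count): since $P$ has width at most $k$, Dilworth's theorem guarantees a partition of $S$ into at most $k$ chains $C_1,\dots,C_k$, which can be computed in polynomial time by a maximum-matching argument. The key combinatorial fact is that every \emph{down-set} (order ideal) $I$ of $P$ is completely determined by the tuple $(|I\cap C_1|,\dots,|I\cap C_k|)$ recording how far $I$ reaches along each chain, because within a chain the down-set must be an initial segment. Hence the number of down-sets is at most $(n+1)^k$, and we can index the dynamic-programming table by such tuples.

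First I would set up the recurrence: letting $e(I)$ denote the number of linear extensions of the sub-poset induced on a down-set $I$, we have $e(\emptyset)=1$ and $e(I)=\sum e(I\setminus\{x\})$, where the sum ranges over maximal elements $x$ of $I$. There are at most $k$ such maximal elements (one per chain, since two elements of the same chain are comparable), so each table entry costs $O(k)$ additions plus the overhead of locating the relevant predecessor tuples and checking which of the $k$ candidates is actually a maximal element of $I$ (a comparability check against at most $k-1$ other chain-tops, i.e.\ $O(k)$ comparisons, each $O(1)$ after preprocessing the order relation into a lookup table in $O(n^2)$ time). So each of the at most $(n+1)^k$ entries takes $O(k)$ time once we also account for the bookkeeping, but we must be careful: the number of \emph{reachable} down-sets is what actually matters, and a cleaner bound comes from noting that we only ever enumerate down-set tuples componentwise, giving the $n^{\max(3,k)}$ shape once the $\max(3,\cdot)$ floor from the matching/preprocessing steps is included. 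The extra $k^2$ factor arises as follows: one factor of $k$ from the $O(k)$ work per table entry (summing over $\le k$ maximal elements and doing $\le k$ comparability checks), and we can absorb the second factor of $k$ by observing that the arithmetic is on integers of magnitude $\le n!$, so each addition is $O(n\log n)$ bit operations — but to keep the statement in the RAM model as in Theorem~\ref{bounded_width} one instead attributes the second $k$ to either the chain-decomposition bookkeeping or a looser-but-uniform accounting of the table traversal; the point is only that $k^2$ suffices and the constant multiplying $k^2 n^{\max(3,k)}$ is genuinely independent of both $n$ and $k$.

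The main obstacle I anticipate is \emph{not} the algorithm itself but the careful accounting needed to extract a constant that is uniform in $k$. In the original statement, $k$ is fixed, so phrases like ``$O(k)$ work per step'' get silently folded into the big-$O$ constant; here we must instead exhibit an explicit polynomial in $k$ (namely $k^2$) and argue that every remaining overhead — building the comparability matrix, running the chain-decomposition matching, allocating and indexing the $(n+1)^k$-entry table, iterating over its entries in a valid topological order of down-sets — contributes only a universal constant times $n^{\max(3,k)}$ with no further $k$-dependence, or $k$-dependence already dominated by $k^2$. The delicate points are (i) confirming that enumerating the down-set tuples in increasing order of size, and for each determining its at-most-$k$ maximal elements, really does cost only $O(k)$ beyond table lookups, and (ii) making sure the $\max(3,k)$ exponent — not merely $k$ — survives, which it does because the matching-based chain decomposition and the $O(n^2)$ preprocessing already cost $O(n^3)$ regardless of $k$.

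Concretely, the write-up would be: (1) state that we use the chain-decomposition dynamic program from the proof of Theorem~\ref{bounded_width} in \cite{Cooper}; (2) recall that down-sets biject with componentwise-bounded integer tuples, bounding the table size by $(n+1)^k$; (3) bound per-entry work by $O(k)$ given $O(n^2)$-time preprocessing of the order relation; (4) add the $O(n^3)$ cost of the preliminary chain decomposition and preprocessing, yielding total time $O\bigl(n^3 + k\,(n+1)^k\bigr)$, and then (5) note that $k\,(n+1)^k = O(k^2 n^{\max(3,k)})$ with an absolute constant — the extra factor of $k$ being a harmless slack that also covers the minor bookkeeping we chose not to itemize — which is exactly the claimed bound. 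The only real work is step~(3)'s careful count and step~(5)'s honest verification that no stray $k$'s remain in the constant.
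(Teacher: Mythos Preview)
Your proposal is correct and is precisely what the paper intends: the paper does not give a separate proof of this corollary at all, but merely remarks that it follows ``by retracing the proof of [Theorem~\ref{bounded_width}] and keeping track of the constants,'' which is exactly the chain-decomposition dynamic program you outline. Your accounting of the $k$-dependence (at most $(n+1)^k$ down-sets, $O(k)$ work per entry, $O(n^3)$ preprocessing for the chain decomposition) is the standard one and yields the stated bound with an absolute constant.
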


\begin{theorem}
\label{bounded_width_permutation_from_identity}
Let \( \sigma \in { \mathcal S }_n\) have width less than or equal to
\( k\). Let \( U = [ \id , \sigma ]\) be an interval in the weak Bruhat
order. Then \( | U |\) can be computed in \( O ( k^2 n^{ \max ( 3 , k )
} )\) time, where the constant is independent of both \( n\) and \(
k\).
\end{theorem}
\begin{proof}
By Lemma \ref{Bruhat_intervals_from_identity}, \( | U | = | { \mathcal
L } ( \Phi ( \sigma ) ) |\). From Lemma \ref{width}, the width of \(
\Phi ( \sigma )\) equals the width of \( \sigma\), which is less than
or equal to \( k\). Therefore, by Corollary
\ref{bounded_width_modified}, the number of linear extensions of \(
\Phi ( \sigma )\) can be computed in \( O ( k^2 n^{ \max ( 3 , k ) }
)\) time, and hence \( | U |\) can be computed in \( O ( k^2 n^{ \max (
3 , k ) } )\) time as well.
\end{proof}

One additional lemma allows us to generalize the previous theorem to
intervals which do not necessarily contain the identity.

\begin{lemma}
\label{arbitrary_intervals}
Let \( [ \sigma_1 , \sigma_2 ]\) be an interval in the weak Bruhat
order. Then \( | [ \sigma_1 , \sigma_2 ] | = | [ \id , \sigma_1^{ - 1 }
\sigma_2 ] |\).
\end{lemma}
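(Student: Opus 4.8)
The plan is to exploit the fact that the weak Bruhat order is left-invariant: multiplying every permutation on the left by a fixed permutation $\pi$ is an order-automorphism of $({\mathcal S}_n, \prec)$. Indeed, the covering relation $\sigma_1 \lessdot \sigma_2$ is defined by $\sigma_1 = \sigma_2 \tau$ for an adjacent transposition $\tau$ together with the condition $\sigma_1(k) < \sigma_1(l)$, where $\tau$ reverses $k$ and $l$ with $k<l$; however, one checks that this condition is equivalent to saying $(k,l)$ is \emph{not} an inversion of $\sigma_2$, i.e. that the set of inversions of $\sigma_1$ is obtained from that of $\sigma_2$ by deleting one pair of adjacent-in-value... wait — more cleanly, $\sigma \prec \tau$ iff the inversion set of $\sigma$ is contained in that of $\tau$ (as stated in the abstract), and the inversion set of $\sigma$ is the set of pairs $\{i,j\}$, $i<j$, with $\sigma^{-1}(i) > \sigma^{-1}(j)$. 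The key observation is then that $\sigma \mapsto \pi\sigma$ does not obviously preserve inversions, so instead I would track the relation through \emph{right} multiplication or reformulate via the standard fact that $\sigma \prec \tau$ in the (right) weak order iff $\ell(\sigma) + \ell(\sigma^{-1}\tau) = \ell(\tau)$, where $\ell$ is the number of inversions (Coxeter length). Once that length-additivity characterization is in hand, the statement is nearly immediate.

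Concretely, here is how I would carry it out. First, establish (or cite, since it is implicit in the Bj\"orner--Wachs framework already invoked) the length-additivity characterization: $\sigma_1 \prec \rho$ if and only if $\ell(\sigma_1^{-1}\rho) = \ell(\rho) - \ell(\sigma_1)$, equivalently every reduced word for $\sigma_1$ extends to a reduced word for $\rho$. Second, define the map $f : {\mathcal S}_n \to {\mathcal S}_n$ by $f(\rho) = \sigma_1^{-1}\rho$. This is a bijection, so it suffices to show $f$ restricts to a bijection from $[\sigma_1, \sigma_2]$ onto $[\id, \sigma_1^{-1}\sigma_2]$. Third, verify the two containments: if $\sigma_1 \prec \rho \prec \sigma_2$, then $\ell(\sigma_1) + \ell(\sigma_1^{-1}\rho) = \ell(\rho)$ and $\ell(\rho) + \ell(\rho^{-1}\sigma_2) = \ell(\sigma_2)$; adding and cancelling gives $\ell(\sigma_1^{-1}\rho) + \ell((\sigma_1^{-1}\rho)^{-1}(\sigma_1^{-1}\sigma_2)) = \ell(\sigma_1^{-1}\sigma_2)$, since $(\sigma_1^{-1}\rho)^{-1}(\sigma_1^{-1}\sigma_2) = \rho^{-1}\sigma_2$, which says exactly $\id \prec \sigma_1^{-1}\rho \prec \sigma_1^{-1}\sigma_2$ (note $\id \prec \mu$ always holds, and $\mu \prec \sigma_1^{-1}\sigma_2$ is the additivity with $\ell(\id)=0$). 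The reverse containment is symmetric, running the same computation backwards with $f^{-1}(\mu) = \sigma_1\mu$.

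The main obstacle — really the only thing requiring care — is getting the length-additivity characterization of $\prec$ stated correctly and in a form compatible with the composition convention $fg(x) = f(g(x))$ used in this paper, since the "left" versus "right" weak order distinction is exactly a matter of which side one multiplies on, and the covering relation as written here uses $\sigma_1 = \sigma_2\tau$ (right multiplication by the generator). I would double-check that under this convention, $\sigma \prec \tau$ corresponds to $\ell(\sigma) + \ell(\sigma^{-1}\tau) = \ell(\tau)$ and not $\ell(\tau\sigma^{-1})$; a quick test on ${\mathcal S}_3$ using Figure \ref{Bruhat_order_example} settles which is right. Everything after that is bookkeeping: the length function is the number of inversions, the interval $[\sigma_1,\sigma_2]$ unpacks into two additivity relations, and the cancellation $(\sigma_1^{-1}\rho)^{-1}(\sigma_1^{-1}\sigma_2) = \rho^{-1}\sigma_2$ is a one-line identity. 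An alternative, more self-contained route that avoids even mentioning Coxeter length is to argue purely from covering relations: show that $\rho \lessdot \rho'$ iff $\sigma_1^{-1}\rho \lessdot \sigma_1^{-1}\rho'$ by a direct check on adjacent transpositions and the inequality condition $\rho(k) < \rho(l)$, and then take reflexive-transitive closures; this is elementary but requires verifying the covering condition transforms correctly, which is the same subtlety in a different guise.
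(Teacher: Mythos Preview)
Your argument via the length-additivity characterization \(\sigma \prec \tau \iff \ell(\sigma) + \ell(\sigma^{-1}\tau) = \ell(\tau)\) is correct and is exactly the standard proof of the fact the paper cites: the paper's own proof is the single sentence ``This result follows immediately from Proposition 2.3 in \cite{bjorner2},'' so you are essentially reconstructing that proposition rather than taking a different route.

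One small point worth tightening: the reverse containment is not literally ``the same computation backwards.'' Given \(\mu \in [\id,\sigma_1^{-1}\sigma_2]\) you must show \(\ell(\sigma_1\mu) = \ell(\sigma_1) + \ell(\mu)\), which does not follow from length additivity for \(\mu\) alone. The clean way is a squeeze using subadditivity of \(\ell\): from \(\ell(\sigma_2) \leq \ell(\sigma_1\mu) + \ell(\mu^{-1}\sigma_1^{-1}\sigma_2) \leq \ell(\sigma_1) + \ell(\mu) + \ell(\mu^{-1}\sigma_1^{-1}\sigma_2) = \ell(\sigma_1) + \ell(\sigma_1^{-1}\sigma_2) = \ell(\sigma_2)\), all inequalities collapse to equalities, giving both \(\sigma_1 \prec \sigma_1\mu\) and \(\sigma_1\mu \prec \sigma_2\). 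Your alternative ``covering-relation'' route, by contrast, does \emph{not} work directly: left multiplication by \(\sigma_1^{-1}\) need not preserve the covering condition \(\rho(k) < \rho(l)\), so the weak order is not left-invariant at the level of covers (as you began to notice in your first paragraph before switching strategies).
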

\begin{proof}
This result follows immediately from Proposition 2.3 in
\cite{bjorner2}.
\end{proof}

\begin{theorem}
\label{general_bounded_width_result}
Let \( U = [ \sigma_1 , \sigma_2 ]\) be an interval in the weak Bruhat
order on \( { \mathcal S }_n\). If the width of \( \sigma_1^{ - 1 }
\sigma_2\) is less than or equal to \( k\), then \( | U |\) can be
computed in \( O ( k^2 n^{ \max ( 3 , k ) } )\) time.
\end{theorem}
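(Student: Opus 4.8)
The plan is to combine the two preceding results in a single short deduction. By Lemma~\ref{arbitrary_intervals}, the size of the interval $[\sigma_1,\sigma_2]$ equals the size of the interval $[\id,\sigma_1^{-1}\sigma_2]$, so the problem reduces immediately to the case of an interval based at the identity. Then, since $\width(\sigma_1^{-1}\sigma_2)\le k$ by hypothesis, Theorem~\ref{bounded_width_permutation_from_identity} applies to $\sigma=\sigma_1^{-1}\sigma_2$ and tells us that $|[\id,\sigma_1^{-1}\sigma_2]|$ can be computed in $O(k^2 n^{\max(3,k)})$ time, with a constant independent of $n$ and $k$.

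The only genuinely new bookkeeping is to check that the reduction step itself does not dominate the stated running time. Forming $\sigma_1^{-1}\sigma_2$ from $\sigma_1$ and $\sigma_2$ costs $O(n)$ time, and computing its width (e.g. via patience sorting / longest decreasing subsequence) costs $O(n\log n)$; both are absorbed into $O(k^2 n^{\max(3,k)})$ since $\max(3,k)\ge 3$. So the bound in the theorem statement genuinely accounts for the whole computation, not just the linear-extension count.

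I do not anticipate a real obstacle here: the substance of the argument was already done in establishing Theorem~\ref{bounded_width_permutation_from_identity} (via Lemma~\ref{Bruhat_intervals_from_identity}, Lemma~\ref{width}, and Corollary~\ref{bounded_width_modified}) and in Lemma~\ref{arbitrary_intervals}. If anything, the only point worth a sentence of care is that "bounded width of $\sigma_1^{-1}\sigma_2$'' is exactly the hypothesis needed to invoke Theorem~\ref{bounded_width_permutation_from_identity} on the shifted interval, and that the translation $[\sigma_1,\sigma_2]\mapsto[\id,\sigma_1^{-1}\sigma_2]$ is precisely the content of Lemma~\ref{arbitrary_intervals}. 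So the proof is essentially two lines: apply Lemma~\ref{arbitrary_intervals} to reduce to the identity-based interval, then apply Theorem~\ref{bounded_width_permutation_from_identity}.
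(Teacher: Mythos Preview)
Your proposal is correct and matches the paper's own proof essentially line for line: apply Lemma~\ref{arbitrary_intervals} to reduce to $[\id,\sigma_1^{-1}\sigma_2]$, then invoke Theorem~\ref{bounded_width_permutation_from_identity}. Your extra remark that forming $\sigma_1^{-1}\sigma_2$ and computing its width costs at most $O(n\log n)$, hence is absorbed into the stated bound, is a small but welcome addition that the paper leaves implicit.
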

\begin{proof}
By Lemma \ref{arbitrary_intervals}, \( | U | = | [ \id , \sigma_1^{ - 1
} \sigma_2 ] |\). Since the width of \( \sigma_1^{ - 1 } \sigma_2\) is
less than or equal to \( k\), by Theorem
\ref{bounded_width_permutation_from_identity}, \( | [ \id , \sigma_1^{
- 1 } \sigma_2 ] |\), and hence \( | U |\), can be computed in \( O (
k^2 n^{ \max ( 3 , k ) } )\) time.
\end{proof}

\begin{theorem}
The problem of computing the number of linear extensions of an
arbitrary two-dimensional poset and
the problem of computing the size of an arbitrary interval in the weak
Bruhat order are mutually polynomial-time reducible.
\end{theorem}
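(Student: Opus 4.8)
The plan is to exhibit two polynomial-time reductions, one in each direction, both routed through the map \( \Phi \) and the lemmas already established above.

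For the reduction \emph{from} interval sizes \emph{to} linear-extension counts, suppose we are given an interval \( [ \sigma_1 , \sigma_2 ] \) in the weak Bruhat order on \( { \mathcal S }_n \). By Lemma \ref{arbitrary_intervals}, \( | [ \sigma_1 , \sigma_2 ] | = | [ \id , \sigma_1^{ - 1 } \sigma_2 ] | \), and by Lemma \ref{Bruhat_intervals_from_identity}, \( | [ \id , \sigma_1^{ - 1 } \sigma_2 ] | = | { \mathcal L } ( \Phi ( \sigma_1^{ - 1 } \sigma_2 ) ) | \). The permutation \( \sigma_1^{ - 1 } \sigma_2 \), and then the poset \( \Phi ( \sigma_1^{ - 1 } \sigma_2 ) \), obtained by intersecting the chains \( 1 \prec \cdots \prec n \) and \( ( \sigma_1^{ - 1 } \sigma_2 ) ( 1 ) \prec \cdots \prec ( \sigma_1^{ - 1 } \sigma_2 ) ( n ) \), can be written down in time polynomial in \( n \). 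This exhibits a polynomial-time reduction from computing \( | [ \sigma_1 , \sigma_2 ] | \) to counting linear extensions of a two-dimensional poset.

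For the reduction in the other direction, let \( P = ( S , \prec ) \) be a two-dimensional poset with \( | S | = n \). First obtain a realizer \( \{ L_1 , L_2 \} \) of \( P \) consisting of two linear extensions: if the input already presents \( P \) by such a pair of chains, there is nothing to do, and otherwise we invoke the classical polynomial-time algorithm that recognizes posets of dimension at most two and outputs a realizer (which reduces to finding a transitive orientation of the incomparability graph of \( P \)). Regard \( L_1 \) and \( L_2 \) as bijections \( [ n ] \to S \) listing their elements in order, and set \( \sigma = L_1^{ - 1 } L_2 \in { \mathcal S }_n \). Relabeling the ground set of \( P \) along \( L_1^{ - 1 } \) is a poset isomorphism that carries \( L_1 \) to the chain \( 1 \prec \cdots \prec n \) and \( L_2 \) to the chain \( \sigma ( 1 ) \prec \cdots \prec \sigma ( n ) \), hence carries \( P \) to \( \Phi ( \sigma ) \); since an isomorphism induces a bijection on linear extensions, \( | { \mathcal L } ( P ) | = | { \mathcal L } ( \Phi ( \sigma ) ) | \), which equals \( | [ \id , \sigma ] | \) by Lemma \ref{Bruhat_intervals_from_identity}. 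Computing the realizer, then \( \sigma \), then a description of the interval \( [ \id , \sigma ] \), all take time polynomial in \( n \), so this is a polynomial-time reduction from counting linear extensions of a two-dimensional poset to computing the size of a weak Bruhat interval. Combining the two reductions proves the theorem.

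The only step that is not pure bookkeeping is the computation of a two-element realizer in the second reduction, and I expect that to be the main point to handle carefully, since it rests on the polynomial-time solvability of dimension-two poset recognition rather than on anything developed in this paper. If one adopts the convention — natural here, given that \( \Phi \) itself outputs posets as intersections of two explicit chains — that a two-dimensional poset is always presented together with a realizer, then even this step is immediate, and both reductions become entirely routine applications of \( \Phi \) and Lemmas \ref{Bruhat_intervals_from_identity} and \ref{arbitrary_intervals}.
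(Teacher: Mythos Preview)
Your proof is correct and follows essentially the same route as the paper: both directions are routed through \(\Phi\) together with Lemmas \ref{Bruhat_intervals_from_identity} and \ref{arbitrary_intervals}. The only difference is that in the poset-to-interval direction the paper simply writes \(\sigma = \Phi^{-1}(P)\), tacitly treating \(P\) as already naturally labeled on \([n]\), whereas you explicitly discuss extracting a two-element realizer in polynomial time when \(P\) is presented abstractly; this is additional care the paper omits, not a different argument.
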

\begin{proof}
We show that a polynomial time reduction is possible in both
directions. First, suppose we are given a two-dimensional poset \( P\).
Let \( \sigma = \Phi^{-1} ( P )\). By Lemma
\ref{Bruhat_intervals_from_identity}, \( { \mathcal L } ( P ) = [ \id,
\sigma ]\). By assumption, we can compute \( | [ \id, \sigma ] |\) in
polynomial time, giving \( | { \mathcal L } ( P ) |\) in polynomial
time.

Now, suppose we are given an interval \( U = [ \sigma_1 , \sigma_2 ]\)
in the Bruhat order on \( { \mathcal S }_n\). First, we compute \(
\sigma_1^{-1} \sigma_2\). By Lemma \ref{arbitrary_intervals}, \( | U |
= | [ \id, \sigma_1^{-1} \sigma_2 ] |\). By Lemma
\ref{Bruhat_intervals_from_identity}, \( | [ \id, \sigma_1^{-1}
\sigma_2 ] | = | { \mathcal L } ( \Phi ( \sigma_1^{-1} \sigma_2 ) )
|\). A polynomial time calculation for the number of linear extensions
of \( \Phi ( \sigma_1^{-1} \sigma_2 )\) then yields the size of \( U\).
\end{proof}

\section{Permutations of Bounded Intrinsic Width }
\label{intrinsic_width_section}

We now extend the results from the previous section to
encompass a larger set of permutations, namely those of bounded
intrinsic width. Before defining intrinsic width, we need to give some
definitions related to the Gallai decomposition of a poset.

Given \(P = (S,\prec)\), define a subset \(T \subset S\) to be a {\em module} of \(P\) if, for all \(u,v \in T\) and \(x \in S \setminus T\), \(u \prec x\) iff \(v \prec x\) and \(x \prec u\) iff \(x \prec v\).  A module \(T\) is {\em strong} if, for any module \(U \subset S\), \(U \cap T \neq \emptyset\)
implies \(U \subset T\) or \(T \subset U\).  Thus, the nonempty strong modules of \(P\) form a tree order, called the {\em (Gallai) modular decomposition} of \(P\).  A strong module or poset is said to be {\em indecomposable} if its only proper submodules are singletons and the empty set.  It is a result of Gallai (\cite{Gallai}) that the maximal proper strong modules of \(P\) are a partition \(\Gal(P)\) of \(S\).
We define the quotient poset \( P / \Gal ( P )\) as the poset with
ground set consisting of the strong modules of \( P\) and partial
ordering relation defined by \( T_1 \prec T_2\) if and only if \( t_1
\prec t_2\) for some \( t_1 \in T_1\) and \( t_2 \in T_2\). For
example, consider the poset \( P = ( [9] , \prec )\) shown in Figure
\ref{Gallai_example} (a). The poset has four maximal proper strong
modules, namely \( \{ 1 , 4 , 5 , 6 \}\), \( \{ 2 , 3 \}\), \( \{ 7
\}\), and \( \{ 8 , 9 \}\). (It also has the strong, but not maximal
strong, module \( \{ 4 , 5 , 6 \}\).) Note that these strong modules
form a partition of \( P\). The quotient \( P / \Gal ( P )\) is shown
in Figure \ref{Gallai_example} (b).

\begin{figure}[htb]
\centering
\includegraphics[scale = 0.5] {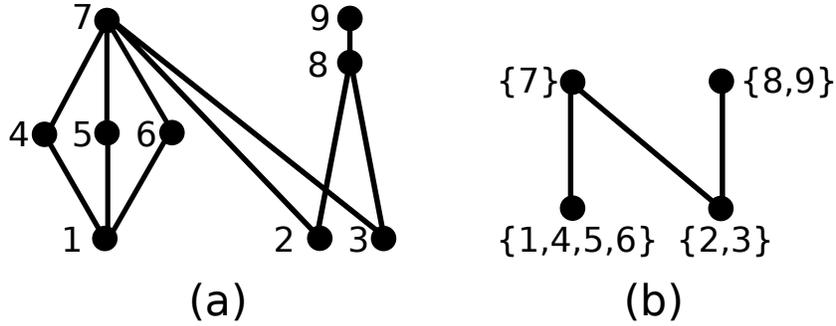}
\caption{A poset \( P \) and quotient \( P / \Gal ( P )\).}
\label{Gallai_example}
\end{figure}

 Furthermore, Gallai showed the following.  The comparability graph \(G(P)\) of a poset \(P\) has as its vertex set the ground set of \(P\) and has an edge \(\{x,y\}\) for \(x \neq y\) if and only if \(x\) and \(y\) are comparable.

\begin{theorem}[Gallai \cite{Gallai}] 
\label{Gallai_decomposition}
Given a poset \(P\) such that \(|P| \geq 2\), one of the following holds.
\begin{enumerate}
\item (Parallel-Type) If \(G(P)\) is not connected, then \(\Gal(P)\) is the family of subposets induced by the connected components of \(G(P)\) and \(P/\Gal(P)\) is an antichain.
\item (Series-Type) If the complement \(\overline{G(P)}\) of \(G(P)\) is not connected, then \(\Gal(P)\) is the family of subposets induced by the connected components of \(\overline{G(P)}\) and \(P/Gal(P)\) is a chain.
\item (Indecomposable-Type) Otherwise, \(|Gal(P)| \geq 4\) and \(P/Gal(P)\) is indecomposable.
\end{enumerate}
\end{theorem}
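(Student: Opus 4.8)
The plan is to derive all three cases from a single closure property of modules together with elementary facts about comparability graphs. Since the excerpt already states (as Gallai's result) that the maximal proper strong modules partition \(S\) as \(\Gal(P)\), what remains is to \emph{identify} this partition and the quotient \(P/\Gal(P)\) in each regime. Two preliminary remarks set things up. First, the three cases are exhaustive by construction, and cases 1 and 2 are mutually exclusive because a graph and its complement cannot both be disconnected. Second, the workhorse is the \emph{overlap lemma}: if \(M\) and \(N\) are modules with \(M\cap N\), \(M\setminus N\), and \(N\setminus M\) all nonempty, then \(M\cap N\), \(M\cup N\), \(M\setminus N\), \(N\setminus M\), and the symmetric difference \(M\triangle N\) are all modules. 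Each of these is a routine check against the definition of module (considering the possible positions of an outside element relative to \(M\) and \(N\)), but it is what makes the strong modules form a laminar family — hence a tree under inclusion whose children of the root \(S\) are exactly the maximal proper strong modules.

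For the Parallel-Type, suppose \(G(P)\) has connected components \(C_1,\dots,C_m\) with \(m\ge 2\). Any union of \(C_i\)'s is a module, since elements of distinct components are pairwise incomparable; in particular each \(C_i\) is a proper module. Each \(C_i\) is strong: if a module \(U\) overlapped \(C_i\), pick \(c\in U\setminus C_i\) (lying in another component, hence incomparable to all of \(C_i\)); applying the module axiom for \(U\) to the pair \(\{a,c\}\subseteq U\) against any \(b\in C_i\setminus U\) forces \(a\) incomparable to \(b\) for every \(a\in U\cap C_i\), contradicting the connectedness of \(C_i\). And each \(C_i\) is maximal among proper strong modules: a strong module strictly between \(C_i\) and \(S\) would overlap a suitable union of components. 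Hence \(\Gal(P)=\{C_1,\dots,C_m\}\), and \(P/\Gal(P)\) is an antichain because there are no cross-component comparabilities.

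The Series-Type is dual, run through the incomparability graph \(\overline{G(P)}\) with components \(D_1,\dots,D_m\). Each \(D_i\) is a module: if \(u,v\in D_i\) are joined by a path \(u=w_0,w_1,\dots,w_r=v\) of consecutively incomparable vertices, then for any \(x\notin D_i\), transitivity forbids \(w_\ell\prec x\prec w_{\ell+1}\) and \(x\prec w_\ell\), \(w_{\ell+1}\prec x\) (either would make \(w_\ell,w_{\ell+1}\) comparable), so all \(w_\ell\) relate to \(x\) identically, and hence so do \(u\) and \(v\). Since every cross pair is comparable, the relation ``all of \(D_i\) below all of \(D_j\)'' is a well-defined transitive tournament on the \(D_i\), i.e.\ a total order, so \(P\) is the ordinal sum of the \(D_i\) in that order; the same overlap-based arguments as above show each \(D_i\) is strong and maximal proper, whence \(\Gal(P)=\{D_1,\dots,D_m\}\) and \(P/\Gal(P)\) is the corresponding chain.

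For the Indecomposable-Type, \(G(P)\) and \(\overline{G(P)}\) are both connected. First, \(|\Gal(P)|\ge 4\): one has \(|\Gal(P)|\ge 2\) always, and if \(G(Q)\) or \(\overline{G(Q)}\) were disconnected for \(Q:=P/\Gal(P)\) then \(G(P)\) or \(\overline{G(P)}\) would be disconnected too; since every poset on two or three elements has a disconnected comparability or incomparability graph, \(|\Gal(P)|\notin\{2,3\}\). Second, \(Q\) is indecomposable. The key observations are: (a) any module \(\mathcal M\subseteq\Gal(P)\) of \(Q\) lifts to the module \(\bigcup_{T\in\mathcal M}T\) of \(P\); (b) conversely, since each part is strong and so cannot be overlapped, any module \(U\) of \(P\) either lies inside a single part or meets every part in \(\emptyset\) or all of it — so \(U\) is a union of parts and corresponds to a module of \(Q\); (c) if \(\mathcal M\) were a proper module of \(Q\) with \(|\mathcal M|\ge 2\), its lift \(N\) is a proper module of \(P\) strictly containing two maximal strong modules, hence \(N\) is not strong (the only strong module containing two maximal proper strong modules is \(S\)), so \(N\) is overlapped by some \(U\); by (b), \(U\) is a union of parts, so \(\mathcal M\) is overlapped inside \(Q\); choosing \(\mathcal M\) with the fewest parts and applying the overlap lemma to \(\mathcal M\) and an overlapping module in \(Q\) drives the number of parts down to a contradiction. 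I expect step (c) to be the main obstacle: making the descent on \(|\mathcal M|\) airtight — in particular handling \(|\mathcal M|=2\), where one needs the \(M\triangle N\) clause of the overlap lemma — is precisely the technical heart of modular-decomposition theory, and everything else above is bookkeeping around it.
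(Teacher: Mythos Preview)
The paper does not prove this theorem at all: it is stated with attribution to Gallai and a citation, and then used as a black box. So there is no ``paper's own proof'' to compare against; your write-up is attempting strictly more than the paper does.

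On the substance of your attempt: the Parallel-Type and Series-Type cases are essentially correct, with only minor elisions (e.g., the maximality of each \(C_i\) among proper strong modules deserves one more line --- you need that any strong \(M\) with \(C_i\subsetneq M\subsetneq S\) is a union of at least two but not all components, and then exhibit an overlapping union explicitly; this is routine but not literally ``would overlap a suitable union''). The genuine gap is exactly where you flag it: in the Indecomposable-Type case, the descent on \(|\mathcal M|\) does not close. Minimality forces \(|\mathcal M|=2\) and \(|\mathcal M_U|=2\), but invoking the symmetric-difference clause of the overlap lemma at that point only hands you another size-two module, not a contradiction. One way to finish along your lines: starting from \(\{a,b\}\) and an overlapping \(\{a,c\}\), the overlap lemma yields that all \(2\)-subsets of \(\{a,b,c\}\) and the set \(\{a,b,c\}\) itself are modules of \(Q\); iterating (the lift of this set is again non-strong, hence overlapped by a further size-two module introducing a new element \(d\), and the overlap lemma then makes every pair in \(\{a,b,c,d\}\) a module) grows a subset \(A\subseteq Q\) all of whose \(2\)-subsets are modules until \(A=Q\). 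But if every \(2\)-subset of \(Q\) is a module, then any three elements are pairwise all comparable or pairwise all incomparable, forcing \(Q\) to be a chain or an antichain --- impossible since \(G(Q)\) and \(\overline{G(Q)}\) inherit connectedness from \(G(P)\) and \(\overline{G(P)}\). Without some argument of this kind (or the more standard route via maximal, rather than minimal, proper modules), step~(c) as written is incomplete.
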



Figure \ref{Gallai_example} only captures the quotient construction for
the first level of the Gallai Modular Decomposition. The full
decomposition (represented only as subsets without the quotient posets)
is shown in figure \ref{full_Gallai_example}.

\begin{figure}[htb]
\centering
\includegraphics[scale = 0.36] {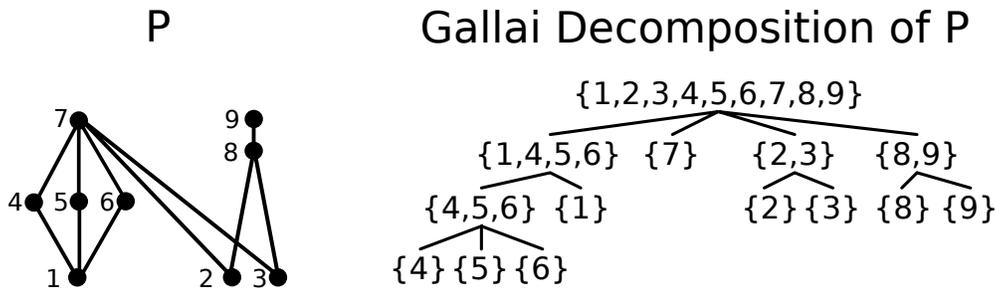}
\caption{A poset \( P \) and its Gallai decomposition.}
\label{full_Gallai_example}
\end{figure}

Define the {\em intrinsic width} \(\iw(P)\) of a poset as the maximum width of the posets \(P|_T/\Gal(P|_T)\) over all indecomposable-type nodes \(T\) of the tree order given by the Gallai modular decomposition of \(P\).  So, for example, series-parallel posets are characterized by having intrinsic width \(1\).
The example given in Figure \ref{Gallai_example} has intrinsic width \(
2\).

To parallel the concept of poset intrinsic width, we introduce the
concept of intrinsic width for permutations. First, we describe a
systematic way of building permutations, which parallels the Gallai
decomposition for posets. Let \( \sigma \in { \mathcal S }_n\) be a
permutation. Let \( ( \tau_i )_{ i = 1 }^n\) be a sequence of
permutations with \( \tau_i \in { \mathcal S }_{ m_i }\). Define \( M =
\sum_{ i = 1 }^n m_i\) and define the interval \( A_j = [ 1 + \sum_{ i
= 1 }^{ j - 1 } m_i , \sum_{ i = 1 }^{ j } m_i ]\). Finally, we define
the {\em inflation} of \( \sigma\), which is denoted \( \sigma [ \tau_1 ,
\tau_2 , \ldots , \tau_n ]\). For each \( x \in A_j\), define 
\[
\sigma [ \tau_1 , \tau_2 , \ldots , \tau_n ] ( x ) = 
\sum_{ i : \sigma(i) < \sigma(j)} m_i + 
\tau_j \left ( x - \sum_{ i = 1 }^{ j - 1 } m_i \right ).
\]

To help provide intuition for this definition, consider the following
example. Let \( \sigma = 1 32\), \( \tau_1 = 2314\), \( \tau_2 = 12\),
and \( \tau_3 = 321\). Then \( \sigma [ \tau_1 , \tau_2 , \tau_3 ] =
231489765\). This permutation is represented graphically in Figure
\ref{blowup}, and its construction as an inflation is indicated by the
three light gray boxes.

\begin{figure}[htb]
\centering
\includegraphics[scale = 0.5] {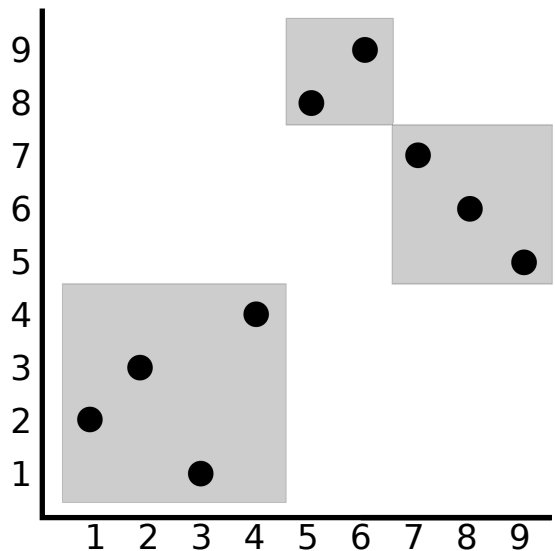}
\caption{The construction of a permutation as an inflation. The
permutation shown, \( 231489765\), is constructed as \( \sigma [ \tau_1
, \tau_2 , \tau_3 ]\) where \( \sigma = 132\), \( \tau_1 = 2314\), \(
\tau_2 = 12\), and \( \tau_3 = 321\).}
\label{blowup}
\end{figure}

We use this notion of inflation to decompose permutations in a
manner similar to the Gallai decomposition. First, we need a couple of
additional definitions. Given a permutation \( \pi \in { \mathcal S
}_n\) a {\em block} in \( \pi\) is a set of consecutive indices \( \{ i ,
i + 1 , \ldots , i + k \}\) such that the set of images \( \{ \pi ( i )
, \pi ( i + 1 ) , \ldots , \pi ( i + k ) \}\) is a contiguous subset of
\( [ n ]\). Note that blocks in the permutation \( \pi\) correspond to
intervals in the poset \( \Phi ( \pi )\). It is easy to see that all
permutations have blocks of size \( 1\) and \( n\). If \( \pi\) has no
other blocks, then it is {\em simple}. Notice that \( \pi\) is simple if
and only if \( \Phi ( \pi )\) is indecomposable. Furthermore, Albert
and Atkinson have shown that the decomposition of \( \pi = \sigma [
\tau_1 , \tau_2 , \ldots , \tau_k ]\) is unique in the case where \(
\sigma\) must be a simple permutation.

\begin{lemma}[Albert and Atkinson \cite{Albert}]
\label{block_decomposition_uniqueness}
Let \( \pi \in { \mathcal S }_n\). Then, there is a unique simple
permutation \( \sigma\) and a sequence of permutations \( \tau_1 ,
\tau_2 , \ldots , \tau_k\) such that \( \pi = \sigma [ \tau_1 , \tau_2
, \ldots , \tau_k ]\). If \( \sigma \not \in \{ 1 2 , 21 \}\), then \(
\tau_1 , \tau_2 , \ldots , \tau_k\) are also uniquely determined by \(
\pi\).
\end{lemma}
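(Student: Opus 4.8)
The plan is to prove this through the bijection $\Phi$ and Gallai's modular decomposition theorem (Theorem \ref{Gallai_decomposition}). The starting point is the correspondence already recorded in the text: the blocks of a permutation $\pi$ are exactly the modules of the two-dimensional poset $\Phi(\pi)$ (a block is a rectangular sub-pattern, i.e.\ an autonomous set), and $\pi$ is simple precisely when $\Phi(\pi)$ is indecomposable. To this I would add one easy check: $\Phi$ carries inflation to modular substitution, so that if $\pi = \sigma[\tau_1,\dots,\tau_k]$ then $\Phi(\pi)$ is obtained from $\Phi(\sigma)$ by replacing its $j$th element with a copy of $\Phi(\tau_j)$ (each copy forming a module inheriting the external relations of the point it replaces). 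This follows at once from the inflation formula and the realizer description of $\Phi$ (positions give one linear extension, values the other), and it identifies ``write $\pi$ as an inflation of a simple permutation'' with ``write $\Phi(\pi)$ as a modular substitution with an indecomposable quotient.'' It is also convenient to note that $12[\alpha,\beta]$ and $21[\alpha,\beta]$ are the direct and skew sums, whose images under $\Phi$ are the ordinal sum and the disjoint union — the series- and parallel-type quotients of Theorem \ref{Gallai_decomposition}.

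Existence I would prove by induction on $n$ (the case $n=1$, and more generally the case that $\pi$ is already simple, being trivial with all $\tau_i$ of size $1$). If $|\pi|\ge 2$ and $\pi$ is not simple, then $|\Phi(\pi)|\ge 2$ and $\Phi(\pi)$ is decomposable, so Theorem \ref{Gallai_decomposition} classifies its top node. A series-type node makes $\Phi(\pi)$ an ordinal sum and $\pi$ a direct sum, so $\pi = 12[\alpha,\beta]$ with $\alpha,\beta$ smaller; a parallel-type node symmetrically gives $\pi = 21[\alpha,\beta]$; an indecomposable-type node gives $\Gal(\Phi(\pi))$ as consecutive blocks $A_1,\dots,A_k$ of $\pi$ with $k\ge 4$ and quotient a simple $\sigma\in\mathcal S_k$, so $\pi = \sigma[\tau_1,\dots,\tau_k]$ with $\tau_j$ the pattern on $A_j$. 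In every case the outer permutation is simple.

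For uniqueness the engine is the structural fact that, when the quotient is indecomposable of size $\ge 4$, every module of $\Phi(\sigma)[\Phi(\tau_1),\dots,\Phi(\tau_k)]$ is either contained in a single substituted copy $\Phi(\tau_j)$ (and is a module of it) or else is $\emptyset$, one whole copy, or the entire poset. Granting this, the parts of $\Gal(\Phi(\pi))$ are exactly the copies $\Phi(\tau_j)$, so the integer $k$, the quotient $\Phi(\sigma)$, and the subposets $\Phi(\tau_j)$ are all determined by $\Phi(\pi)$, hence by $\pi$; this gives uniqueness of $\sigma$ and of the $\tau_j$ whenever $\sigma\notin\{12,21\}$. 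When $\sigma\in\{12,21\}$ the top node of $\Phi(\pi)$ is series- or parallel-type and Gallai's theorem still records which of the two it is (a chain quotient is not an antichain quotient), so $\sigma$ remains forced; the $\tau_i$ genuinely are not, since a chain (resp.\ antichain) on $\ge 3$ parts splits into two consecutive blocks in more than one way, which is exactly the ambiguity $123 = 12[1,12] = 12[12,1]$.

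The main obstacle is that last structural lemma, which has to be proved rather than cited at the level we need: one must rule out a module that straddles several copies only partially — this forces the complement of a point to be a module of $\Phi(\sigma)$, which an ordinal-sum or disjoint-union decomposition of $\Phi(\sigma)$ would exploit, contradicting indecomposability of size $\ge 4$ — and one must confirm that each copy $\Phi(\tau_j)$ is not merely a module but a \emph{maximal proper strong} module. Handling the small-quotient cases ($|\sigma|\le 3$), where primality fails and the decomposition correctly collapses to $\sigma\in\{12,21\}$, requires the same care. Everything else is Gallai's theorem read through $\Phi$ together with a routine unwinding of the inflation formula. (A self-contained alternative, avoiding posets, is the original argument of Albert and Atkinson via the lemma that two overlapping blocks of $\pi$ have block union, intersection, and differences; the poset route simply lets Theorem \ref{Gallai_decomposition} do this bookkeeping.)
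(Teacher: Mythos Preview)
The paper does not actually prove this lemma: it is stated as a result of Albert and Atkinson \cite{Albert}, and the surrounding text merely remarks that their direct proof via blocks of $\pi$ can be replaced by reading off the Gallai decomposition of $\Phi(\pi)$. Your proposal is precisely that second route, carried out in detail, and it is correct.

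A brief comparison is still worth recording. Albert and Atkinson's original argument stays entirely on the permutation side: its engine is the elementary lemma that if two blocks of $\pi$ overlap then their union, intersection, and set differences are blocks, from which the maximal proper blocks and the simple quotient fall out directly. Your route transports everything through $\Phi$ and invokes Theorem~\ref{Gallai_decomposition}, so the bookkeeping about overlapping blocks is absorbed into Gallai's theorem; the price is the structural lemma you flag (that in a substitution over an indecomposable quotient of size $\geq 4$, every module lies inside one fibre or is trivial), which is exactly the poset analogue of the block-overlap lemma and requires essentially the same argument. You correctly identify this as the one nontrivial step and sketch the right proof idea (a straddling module forces a nontrivial module in $\Phi(\sigma)$). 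The treatment of the $\sigma\in\{12,21\}$ ambiguity is also accurate and matches the paper's remark about the ``minor disagreement'' between the two decompositions at series/parallel nodes.
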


Albert and Atkinson prove this result directly by considering the
blocks of \( \pi\). However, the lemma can also be obtained by applying
the Gallai decomposition to the two-dimensional poset \( \Phi ( \pi
)\). Viewed in this light, the above lemma is in fact a restricted case
of Theorem \ref{Gallai_decomposition}. The case where \( \sigma = 12\)
corresponds to a series-type node, and the case where \( \sigma = 21\)
corresponds to a parallel-type node.  There is one minor disagreement
between the two decompositions: Albert and Atkinson would term a
monotone permutation of size more than two decomposable, whereas the
Gallai decomposition tree of its corresponding poset has height only two.  This is due
to a slight difference in the handling of series-type and parallel-type
nodes, where more than two child nodes are allowed in the Gallai
decomposition, but not in the permutation block decomposition.

Like the Gallai decomposition, we can recursively apply Lemma
\ref{block_decomposition_uniqueness} to each of \( \tau_1 , \tau_2 ,
\ldots , \tau_k\) to obtain a complete block decomposition of \( \pi\).
Since Lemma \ref{block_decomposition_uniqueness} precisely corresponds
to Theorem \ref{Gallai_decomposition}, for a given permutation \( \pi\)
the block decomposition of \( \pi\) and the Gallai decomposition of \(
\Phi ( \pi )\) have identical structure, up to the partitioning of
monotone blocks.

We define intrinsic width for permutations recursively. First, any monotone permutation has intrinsic width \(1\),
and any simple non-monotone permutation of width \( k\) has intrinsic width \( k\).
Then 
\[
\iw \left (\sigma [\tau_1 , \ldots , \tau_n ] \right ) = \max \left \{\iw(\sigma),\iw(\tau_1), \ldots , \iw(\tau_n) \right \}.
\]
Notice that this definition makes the set of all
permutations with intrinsic width bounded by \( k\) a 
``substitution-closed class''.

\begin{lemma}
\label{intrinsic_width}
Let \( \pi \in { \mathcal S }_n\). Then, \( \iw(\pi) = \iw(\Phi(\pi))\).
\end{lemma}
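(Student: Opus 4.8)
The plan is to prove this by structural induction on the block decomposition of $\pi$ (equivalently, induction on $n = |\pi|$), exploiting the correspondence recorded above: the block decomposition tree of $\pi$ and the Gallai modular decomposition tree of $\Phi(\pi)$ agree up to the partitioning of monotone blocks, with simple (indecomposable) permutations in the former matching indecomposable-type nodes in the latter. The organizing principle is that both $\iw(\pi)$ and $\iw(\Phi(\pi))$ equal $\max\{\width(\rho)\}$, where $\rho$ ranges over the simple non-monotone permutations occurring in the block decomposition of $\pi$, under the convention that the maximum of the empty set is $1$ (the series-parallel / monotone case).

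For the base cases: if $\pi$ is monotone then $\iw(\pi) = 1$ by definition, while $\Phi(\pi)$ is a chain or an antichain, hence series-parallel, so $\iw(\Phi(\pi)) = 1$. If $\pi$ is simple and non-monotone of width $k$, then $\iw(\pi) = k$; moreover $\Phi(\pi)$ is indecomposable (as noted in the text), so $\Gal(\Phi(\pi))$ consists of singletons, the root is an indecomposable-type node with $\Phi(\pi)/\Gal(\Phi(\pi)) \cong \Phi(\pi)$, and every other node of the decomposition tree is a singleton. Hence $\iw(\Phi(\pi))$ is just the width of $\Phi(\pi)$, which equals $\width(\pi) = k$ by Lemma \ref{width}.

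For the inductive step, write $\pi = \sigma[\tau_1, \dots, \tau_k]$ for the simple permutation $\sigma$ guaranteed by Lemma \ref{block_decomposition_uniqueness} (taking any valid choice of the $\tau_i$ when $\sigma \in \{12,21\}$); here $k \geq 2$, so each $|\tau_i| < n$. Since blocks of $\pi$ correspond to intervals (modules) of $\Phi(\pi)$, the maximal proper strong modules of $\Phi(\pi)$ are the ground sets of the $\Phi(\tau_i)$ — after merging, in the series/parallel case, the blocks coming from a monotone $\sigma$ — and the quotient $\Phi(\pi)/\Gal(\Phi(\pi))$ is isomorphic to $\Phi(\sigma)$ (a chain or antichain precisely when $\sigma$ is monotone). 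Thus the root of the Gallai decomposition of $\Phi(\pi)$ is of indecomposable type exactly when $\sigma$ is simple and non-monotone, in which case the quotient has width $\width(\Phi(\sigma)) = \width(\sigma)$ by Lemma \ref{width}; otherwise the root contributes nothing to $\iw(\Phi(\pi))$. Applying the inductive hypothesis $\iw(\Phi(\tau_i)) = \iw(\tau_i)$ to each submodule and noting that $\iw(\sigma) = \width(\sigma)$ when $\sigma$ is simple non-monotone and $\iw(\sigma) = 1$ when $\sigma$ is monotone, we obtain
\[
\iw(\Phi(\pi)) = \max\{\iw(\sigma), \iw(\Phi(\tau_1)), \dots, \iw(\Phi(\tau_k))\} = \max\{\iw(\sigma), \iw(\tau_1), \dots, \iw(\tau_k)\} = \iw(\pi).
\]

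The main obstacle is purely bookkeeping. One must verify carefully that $\Phi(\pi)/\Gal(\Phi(\pi)) \cong \Phi(\sigma)$ by matching the two linear extensions of the standard realizer of $\Phi(\pi)$ (left-to-right order and value order) with those of $\Phi(\sigma)$, and one must confirm that the mismatch between the two decomposition trees — the Gallai decomposition collapsing a chain (resp.\ antichain) of monotone blocks into a single series-type (resp.\ parallel-type) node, versus the binary block decomposition — never affects intrinsic width, since every node involved in that discrepancy is series- or parallel-type and every monotone permutation has $\iw = 1$, so only the value $1$ is ever contributed on either side. Neither issue changes the maximum in the displayed equation, so the identity holds.
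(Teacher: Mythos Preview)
Your proof is correct and follows the same approach as the paper's own argument: both rely on the correspondence between the block decomposition of $\pi$ and the Gallai modular decomposition of $\Phi(\pi)$, together with Lemma~\ref{width} to equate widths at each indecomposable-type node. The paper's proof is a terse two-sentence sketch of exactly this idea, whereas you have unpacked it into an explicit structural induction with careful handling of the series/parallel bookkeeping discrepancy; the content is the same.
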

\begin{proof}
This follows from the correspondence between the Gallai and
block decompositions and the definition of intrinsic width. The
permutation \( \sigma\) in the definition of intrinsic width
corresponds to the quotients \( P |_T / \Gal ( P |_T )\), and the
equivalence between poset width and permutation width is provided by
Lemma \ref{width}.
\end{proof}

The following theorem, which is an extension of Theorem
\ref{bounded_width}, bounds the computational complexity
of enumerating linear extensions in the case of bounded intrinsic width. 

\begin{theorem}[Cooper, Kirkpatrick \cite{Cooper}]
\label{bounded_intrinsic_width}
If the intrinsic width of a poset is bounded by \( k\), its number of
linear extensions can be computed in \( O ( n^{ \max ( 4 , k + 1 ) }
)\) time.
\end{theorem}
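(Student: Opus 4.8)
The plan is to compute $|\mathcal{L}(P)|$ by recursion on the Gallai modular decomposition tree of $P$, handling parallel-type, series-type, and indecomposable-type nodes (per Theorem \ref{Gallai_decomposition}) by separate formulas, and invoking the bounded-width counting algorithm of Theorem \ref{bounded_width} (in the sharpened form of Corollary \ref{bounded_width_modified}) only at indecomposable-type nodes. Since the modular decomposition tree of a poset can be built in polynomial time by a standard algorithm, it suffices to give a recurrence for $|\mathcal{L}(P|_T)|$ at each node $T$ in terms of its children together with one bounded-width subcomputation.

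The crux is a substitution formula for linear-extension counts. Fix a node $T$, let $T_1, \ldots, T_m$ be its children (the maximal proper strong submodules of $P|_T$), put $n_i = |T_i|$, and let $\hat{Q}$ be the poset obtained from the quotient $P|_T/\Gal(P|_T)$ by inflating its $i$-th vertex into a chain on $n_i$ elements. I would first prove that the number of linear extensions of $P|_T$ whose restriction to each $T_i$ is a prescribed linear extension of $P|_{T_i}$ is independent of those prescriptions and equals $|\mathcal{L}(\hat{Q})|$: this is exactly the module (homogeneity) property, since every element outside $T_i$ lies uniformly above, below, or beside all of $T_i$, so forcing the internal order of each $T_i$ to be a chain turns $P|_T$ into a poset isomorphic to $\hat{Q}$, whose extensions count precisely the admissible interleavings of the modules. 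Summing over the $\prod_i |\mathcal{L}(P|_{T_i})|$ choices of internal orders then gives
\[
|\mathcal{L}(P|_T)| \;=\; |\mathcal{L}(\hat{Q})| \cdot \prod_{i=1}^{m} |\mathcal{L}(P|_{T_i})| .
\]
At a parallel-type node $\hat{Q}$ is a disjoint union of chains, so $|\mathcal{L}(\hat{Q})| = \binom{n_1 + \cdots + n_m}{n_1, \ldots, n_m}$; at a series-type node $\hat{Q}$ is a single chain and $|\mathcal{L}(\hat{Q})| = 1$; at an indecomposable-type node $|\mathcal{L}(\hat{Q})|$ is computed directly. Leaves are singletons, contributing $|\mathcal{L}| = 1$.

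For indecomposable-type nodes I would observe that inflating vertices into chains does not increase width: an antichain of $\hat{Q}$ meets each inflated chain in at most one element, and the corresponding vertices of $Q := P|_T/\Gal(P|_T)$ form an antichain there, so $\width(\hat{Q}) = \width(Q)$. By the definition of intrinsic width, $\width(Q) \le \iw(P) \le k$ at every indecomposable-type node, so Corollary \ref{bounded_width_modified} evaluates $|\mathcal{L}(\hat{Q})|$ in $O(k^2 |T|^{\max(3,k)})$ time. Assembling the recurrence bottom-up therefore yields $|\mathcal{L}(P)|$ after $O(n)$ such calls.

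For the running time, the dominant cost is the collection of bounded-width computations at indecomposable-type nodes. The decomposition tree has $O(n)$ nodes, each of size at most $n$, so the total is $O(k^2 n \cdot n^{\max(3,k)}) = O(n^{\max(3,k)+1}) = O(n^{\max(4,k+1)})$ once the fixed constant $k$ is absorbed into the big-$O$; a more careful accounting, using that the sizes of sibling modules add, recovers the same bound. The main obstacle I expect is establishing the substitution formula with full rigor — pinning down that the number of module interleavings compatible with $Q$ is genuinely insensitive to the internal orders of the modules, which is precisely where the homogeneity of modules is essential — together with the bookkeeping needed to land on the exponent $\max(4,k+1)$ rather than something larger.
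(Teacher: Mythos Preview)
The paper does not actually prove this theorem: it is stated with a citation to \cite{Cooper} and used as a black box, so there is no in-paper proof to compare against. That said, your proposal is a correct and standard proof of the result, and it is essentially the argument one would expect the cited reference to contain: recurse on the Gallai modular decomposition, use the substitution identity
\[
|\mathcal{L}(P|_T)| = |\mathcal{L}(\hat{Q})|\cdot\prod_i |\mathcal{L}(P|_{T_i})|,
\]
observe that chain-inflation preserves width so that $\width(\hat{Q})=\width(Q)\le k$ at every indecomposable-type node, and invoke Corollary~\ref{bounded_width_modified} there. Your bookkeeping is also fine: the identity $\max(3,k)+1=\max(4,k+1)$ gives exactly the stated exponent, and the crude bound of $O(n)$ tree nodes each of size at most $n$ suffices. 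The one place to be slightly more careful in a full write-up is the bijection underlying the substitution formula (that specifying a linear extension of $\hat{Q}$ together with one of each $P|_{T_i}$ determines a unique linear extension of $P|_T$), but you have correctly identified this as the point requiring the module property and there is no genuine gap.
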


By applying this to dimension-two posets, we obtain an extension of Wei's result (\cite{Wei}).

\begin{theorem}
Let \( k\) be a positive integer. Let \( U = [ \sigma_1 , \sigma_2 ]\)
be an interval in the weak Bruhat order on \( { \mathcal S }_n\). If \(
\sigma_1^{ - 1 } \sigma_2\) has intrinsic width bounded by \( k\), then
\( | U |\) can be computed in \( O ( n^{ \max ( 4 , k + 1 ) } )\) time.
\end{theorem}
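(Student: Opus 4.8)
The plan is to mirror the proof of Theorem \ref{general_bounded_width_result}, substituting intrinsic width for width throughout and invoking Theorem \ref{bounded_intrinsic_width} in place of Corollary \ref{bounded_width_modified}. First I would reduce to the case of an interval anchored at the identity: by Lemma \ref{arbitrary_intervals}, \( |U| = |[\id, \sigma_1^{-1}\sigma_2]| \), and \( \sigma_1^{-1}\sigma_2 \) can be computed from \( \sigma_1 \) and \( \sigma_2 \) in \( O(n) \) time, which is negligible compared with the claimed bound. Write \( \pi = \sigma_1^{-1}\sigma_2 \).

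Next I would translate the interval into a linear-extension count. By Lemma \ref{Bruhat_intervals_from_identity}, the elements of \( [\id, \pi] \), read in one-line notation as chains, are exactly the linear extensions of the two-dimensional poset \( \Phi(\pi) \); hence \( |U| = |\mathcal{L}(\Phi(\pi))| \). Constructing \( \Phi(\pi) \) explicitly (for instance, as its full order relation on \( [n] \)) takes at most \( O(n^2) \) time, again dominated by \( O(n^{\max(4,k+1)}) \). Then I would transfer the width hypothesis from the permutation to the poset: by Lemma \ref{intrinsic_width}, \( \iw(\Phi(\pi)) = \iw(\pi) \leq k \). Now Theorem \ref{bounded_intrinsic_width} applies directly, so the number of linear extensions of \( \Phi(\pi) \) can be computed in \( O(n^{\max(4, k+1)}) \) time, and combining the three displays yields the claimed bound for \( |U| \).

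I do not expect any genuine obstacle; the statement is a formal consequence of results already established in the excerpt, and the only work is bookkeeping — checking that the preprocessing steps (forming \( \sigma_1^{-1}\sigma_2 \) and building \( \Phi(\pi) \)) are subsumed by the dominant term, and that ``intrinsic width bounded by \( k \)'' for the permutation is, via Lemma \ref{intrinsic_width}, literally the same condition as for the associated poset. The one point worth a sentence of care is that Lemma \ref{intrinsic_width} gives an exact equality of intrinsic widths, unaffected by the monotone-block discrepancy between the block and Gallai decompositions noted earlier, since that discrepancy changes neither which nodes are of indecomposable type nor the widths of their quotients.
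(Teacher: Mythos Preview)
Your proposal is correct and follows essentially the same route as the paper's proof: reduce to \([\id,\sigma_1^{-1}\sigma_2]\) via Lemma~\ref{arbitrary_intervals}, identify this interval with \(\mathcal{L}(\Phi(\sigma_1^{-1}\sigma_2))\) via Lemma~\ref{Bruhat_intervals_from_identity}, transfer the intrinsic-width bound to the poset via Lemma~\ref{intrinsic_width}, and invoke Theorem~\ref{bounded_intrinsic_width}. Your additional remarks on preprocessing costs and the monotone-block discrepancy are sound but not needed for the argument.
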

\begin{proof}
By Lemma \ref{arbitrary_intervals}, 
\(
 | U | = | [ \id , \sigma_1^{ - 1} \sigma_2 ] |.
\)
By Lemma \ref{Bruhat_intervals_from_identity}, 
\[
 \left | [
\id , \sigma_1^{ - 1 } \sigma_2 ] \right | = \left | { \mathcal L } ( \Phi (
\sigma_1^{ - 1 } \sigma_2 ) ) \right |.
\]
Since \( \iw(\sigma_1^{ - 1 }
\sigma_2) \leq k\), by Lemma
\ref{intrinsic_width}, the poset \( \Phi ( \sigma_1^{ - 1 } \sigma_2
)\) also has intrinsic width bounded by \( k\). By Theorem
\ref{bounded_intrinsic_width}, \( | { \mathcal L } ( \Phi ( \sigma_1^{
- 1 } \sigma_2 ) ) |\) can be computed in \( O ( n^{ \max ( 4 , k + 1 )
} )\) time.
\end{proof}

\section{Sub-exponential Time Algorithms for Random Permutations}
\label{random_permutations_section}
Thanks to well-known results on the width of random permutations, we are
able to conclude that, for all but an exponentially small fraction of 
\( \sigma \in { \mathcal S }_n\), the quantity \( | [ \id, \sigma ] |\)
 can be computed with a sub-exponential time algorithm. We begin by 
introducing the relevant known results on random permutations. Vershik
 and Kerov \cite{Vershik}, and Logan and Shepp \cite{Logan} showed the
 following. 

\begin{theorem}
\label{mean_width_random_permutation}
Let \( L_n\) be the length of the longest monotone increasing
subsequence of a random permutation. Then,
\begin{equation}
\lim_{ n \rightarrow \infty } \frac { { \bf E } L_n } { \sqrt { n } } =
2.
\end{equation}
\end{theorem}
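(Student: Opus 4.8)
The plan is to reduce the statement to an asymptotic analysis of the Plancherel measure on Young diagrams via the Robinson--Schensted correspondence, following Logan--Shepp and Vershik--Kerov. Recall that Robinson--Schensted is a bijection from \( {\mathcal S}_n \) to pairs \( (P,Q) \) of standard Young tableaux of a common shape \( \lambda \vdash n \), and that by Schensted's theorem the length of the first row of \( \lambda \) equals the length of the longest increasing subsequence of the corresponding permutation. Consequently, if \( \sigma \) is uniformly random in \( {\mathcal S}_n \), then its shape \( \lambda \) is distributed according to the Plancherel measure \( P_n(\lambda) = (f^\lambda)^2/n! \), where \( f^\lambda \) is the number of standard Young tableaux of shape \( \lambda \), and \( L_n \) has the same distribution as the first part \( \lambda_1 \). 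It therefore suffices to prove \( {\bf E}_{P_n}[\lambda_1]/\sqrt{n} \to 2 \).

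Before attacking that, I would record a crude a priori bound. By the first moment method the expected number of increasing subsequences of length \( k \) in a uniform permutation is \( \binom{n}{k}/k! \le (ne^2/k^2)^k \), so \( P(L_n \ge k) \) is \( o(1) \) once \( k \ge (e+\varepsilon)\sqrt n \), and decays fast enough past \( e\sqrt n \) that the family \( \{L_n/\sqrt n\}_n \) is uniformly integrable. This already gives \( \limsup_n {\bf E}L_n/\sqrt n \le e \) and, more importantly, reduces the task to proving \( L_n/\sqrt n \to 2 \) in probability. (Alternatively, a superadditivity argument applied to longest increasing paths in a Poissonized planar point process — via Fekete's lemma or Kingman's subadditive ergodic theorem, followed by de-Poissonization — shows directly that \( {\bf E}L_n/\sqrt n \) converges to \emph{some} constant \( c \); the entire difficulty then lies in evaluating \( c = 2 \).)

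The heart of the argument is a variational analysis of \( P_n \). Using the hook length formula \( f^\lambda = n!/\prod_{u \in \lambda} h(u) \), one rewrites \( \log((f^\lambda)^2/n!) \) and shows that, to leading order in \( \sqrt n \), the exponential rate of \( P_n(\lambda) \) is governed by an explicit logarithmic-energy functional \( I(\omega) \) of the boundary profile \( \omega \) of \( \lambda \) rescaled by \( 1/\sqrt n \) in rotated (``Russian'') coordinates. One then shows: (i) \( I \) attains its minimum at a unique profile \( \Omega \), the Logan--Shepp--Vershik--Kerov curve; (ii) \( \Omega \) is supported on \( [-2,2] \), which is exactly the statement that a \( P_n \)-typical shape has first row of length \( (2+o(1))\sqrt n \); and (iii) a large-deviations estimate, namely that any shape whose rescaled profile lies at uniform distance \( \ge \delta \) from \( \Omega \), and likewise any shape with \( \lambda_1 \ge (2+\delta)\sqrt n \), carries \( P_n \)-probability at most \( e^{-c(\delta)\sqrt n} \). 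Combining (i)--(iii) yields \( \lambda_1/\sqrt n \to 2 \) in probability, and the uniform integrability from the previous paragraph upgrades this to \( {\bf E}L_n/\sqrt n \to 2 \). The main obstacle — and the genuinely technical core — is (i)--(ii): solving the associated singular integral equation (a finite-Hilbert-transform / logarithmic-potential equilibrium problem) to identify \( \Omega \) explicitly, to verify that its support is exactly \( [-2,2] \), and to rule out any competing profile of smaller energy.
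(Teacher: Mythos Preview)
The paper does not prove this theorem; it merely quotes it as a classical result of Vershik--Kerov and Logan--Shepp and then uses it as a black box. So there is no ``paper's own proof'' to compare against.

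Your proposal is a faithful outline of the original Logan--Shepp/Vershik--Kerov argument: the reduction via Robinson--Schensted and Schensted's theorem to the first row under Plancherel measure, the hook-length-based variational problem for the rescaled profile, identification of the limit shape \(\Omega\) supported on \([-2,2]\), and concentration plus uniform integrability to pass from convergence in probability to convergence of means. The first-moment bound and the Hammersley/superadditivity remark are correct and standard. As a sketch it is sound; the only caveat is that what you have written is an outline rather than a proof --- steps (i)--(iii), especially solving the equilibrium problem and establishing the large-deviations estimate, each require substantial work that you have (appropriately) flagged but not carried out. For the purposes of this paper, citing the result as the authors do is the expected move.
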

 
The following concentration result is due to Frieze.

\begin{theorem}[Frieze \cite{Frieze}]
\label{width_concentration}
Suppose that \( \alpha > \frac { 1 } { 3 }\). Then there exists \(
\beta = \beta ( \alpha ) > 0\) such that for \( n\) sufficiently large
\begin{equation}
{ \bf P r } ( | L_n - { \bf E } L_n | \geq n^\alpha ) \leq \exp ( -
n^\beta ).
\end{equation}
\end{theorem}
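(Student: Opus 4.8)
The plan is to prove a sub-Gaussian tail bound for \( L_n \) whose variance proxy is of order \( \sqrt n \) rather than \( n \); that is the crux, since it is exactly what makes a deviation as small as \( n^\alpha \) with \( \alpha > 1/3 \) — in fact already with \( \alpha > 1/4 \) — super-polynomially unlikely. The naive attempt does not achieve this: realizing the random permutation through \( n \) independent sources and noting that resampling one of them changes \( L_n \) by at most \( 1 \), the bounded-differences (McDiarmid) inequality only yields \( {\bf Pr}(|L_n - {\bf E}L_n| \ge t) \le 2\exp(-2t^2/n) \), which is useful only for \( t \) of order \( \sqrt{n\log n} \). The extra leverage comes from the fact that \( L_n \) is itself only of size \( \Theta(\sqrt n) \), so the small set of positions forming a longest increasing subsequence already certifies its value.

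First I would pass to the continuous model. Let \( Z_1, \ldots, Z_n \) be i.i.d.\ uniform on \( [0,1]^2 \) and let \( L = L(Z_1, \ldots, Z_n) \) be the length of the longest chain of the \( Z_i \) in the coordinatewise partial order. Sorting the \( Z_i \) by their first coordinate and reading off the ranks of the second coordinates produces a uniformly random permutation whose longest increasing subsequence has length exactly \( L \), so \( L \) and \( L_n \) have the same distribution. This places us on the product probability space \( ([0,1]^2)^n \), the natural setting for Talagrand-type concentration.

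Next I would record the two structural properties of \( L \) viewed as a function on \( ([0,1]^2)^n \). It is \emph{\( 1 \)-Lipschitz}: replacing a single \( Z_i \) changes \( L \) by at most \( 1 \), since a longest chain avoiding index \( i \) is unaffected and one using \( i \) loses at most that single point. And it is \emph{certifiable with certificate size equal to its value}: if \( L(z) \ge s \), the set \( I \) of indices of some length-\( s \) chain has \( |I| = s \), and every \( z' \) that agrees with \( z \) on the coordinates in \( I \) still contains that chain, so \( L(z') \ge s \). Talagrand's concentration inequality for certifiable \( 1 \)-Lipschitz functions then gives, with \( M \) the median of \( L \),
\[
{\bf Pr}(L \le M - u) \le 2\exp\!\Big(-\frac{u^2}{4M}\Big), \qquad {\bf Pr}(L \ge M + u) \le 2\exp\!\Big(-\frac{u^2}{4(M+u)}\Big).
\]
Integrating these tails shows \( {\bf E}|L - M| = O(\sqrt M) = O(n^{1/4}) \), so \( M \) and \( {\bf E}L_n \) differ by \( O(n^{1/4}) = o(n^\alpha) \) for \( \alpha > 1/4 \), and both are of order \( \sqrt n \) by Theorem~\ref{mean_width_random_permutation}. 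Taking \( u = \tfrac{1}{2} n^\alpha \), which is \( o(\sqrt n) \) since we may assume \( \alpha < 1/2 \), and combining the two tails yields, for \( n \) large and any fixed \( \beta < 2\alpha - \tfrac{1}{2} \),
\[
{\bf Pr}\big(|L_n - {\bf E}L_n| \ge n^\alpha\big) \le {\bf Pr}\big(|L - M| \ge \tfrac{1}{2} n^\alpha\big) \le 4\exp\!\big(-c\, n^{2\alpha - 1/2}\big) \le \exp(-n^\beta)
\]
for a suitable constant \( c > 0 \); since \( \alpha > 1/3 \) forces \( 2\alpha - \tfrac{1}{2} > \tfrac{1}{6} > 0 \), such a \( \beta \) exists.

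The main obstacle is the careful application of Talagrand's inequality — in particular making precise what it means for \( z' \) to ``agree with \( z \) on \( I \)'' so that certifiability holds verbatim, and dealing with the upper tail, where it is the certificate size at level \( M+u \) (not at \( M \)) that enters the exponent; this is harmless here because \( u = o(\sqrt n) \), but it is the one step that needs a word of justification. An alternative that sidesteps Talagrand, and is closer to the argument of \cite{Frieze}, is a recursive strip argument: partition \( [0,1]^2 \) into an \( m\times m \) grid of sub-squares, bound \( L \) above by \( (2m-1) \) times the largest longest-chain among the \( m^2 \) \emph{independent} sub-squares (each of mean \( \approx 2\sqrt n / m \) and with its own concentration by induction), and union-bound over the monotone staircases of sub-squares; optimizing \( m \) produces concentration at the \( n^{1/3} \) scale. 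I would present the Talagrand route as the main line, since its bookkeeping is far lighter, and keep the strip argument as the route matching the cited source.
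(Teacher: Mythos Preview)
The paper does not prove this theorem; it is quoted from Frieze \cite{Frieze} as a known input and used without argument. So there is nothing in the paper to compare your proof against.

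That said, your Talagrand approach is correct and is the now-standard way to obtain this concentration. The passage to i.i.d.\ uniform points in \([0,1]^2\), the \(1\)-Lipschitz and certifiability properties of the longest-chain functional, the resulting tail with variance proxy \(M \sim 2\sqrt{n}\), and the median-to-mean conversion are all sound; as you note, the argument in fact works for any \(\alpha > 1/4\), which is stronger than the stated \(\alpha > 1/3\). Your handling of the upper tail (certificate size \(M+u\) rather than \(M\), harmless since \(u = o(\sqrt n)\)) is the right point of care.

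Frieze's original 1991 proof predates Talagrand's inequality and is closer in spirit to the recursive strip decomposition you sketch at the end: it exploits subadditivity of the longest-increasing-subsequence length under a grid partition of \([0,1]^2\), together with independence across sub-squares, to bootstrap concentration. Your Talagrand route is shorter and gives a better exponent range; the strip argument is the one historically matching the citation. If the goal is merely to justify the theorem as stated, either suffices, and presenting Talagrand as the main line with the strip method as a remark is a reasonable choice.
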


Several similar but stronger results of this type exist, but the above
theorem is sufficient for our purposes. Since permutations with an
increasing subsequence of a given length are in bijection with
permutations having a decreasing subsequence of the same length, we can
read both of these theorems as statements about expected permutation
width. Combining these results with Theorem
\ref{bounded_width_permutation_from_identity} provides the
following.

\begin{theorem}
\label{most_permutations_have_sub_exponential_algorithm}
 There exists \( \beta > 0\) so that 
there are \( n ! ( 1 - \exp ( - n^\beta ) )\) permutations \( \sigma\)
of \( n\) for which \( | [ \id, \sigma ] |\) can be computed in time \(
e^{ ( 2 + o ( 1 ) ) \sqrt { n } \log n}\).
\end{theorem}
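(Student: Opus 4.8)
The plan is to feed the concentration estimate of Theorem~\ref{width_concentration} into the width-parametrized algorithm of Theorem~\ref{bounded_width_permutation_from_identity}. First I would fix any exponent \( \alpha\) with \( \frac{1}{3} < \alpha < \frac{1}{2}\) (for concreteness, \( \alpha = \frac{2}{5}\)), and let \( \beta = \beta(\alpha) > 0\) be the constant supplied by Theorem~\ref{width_concentration}, so that for \( n\) large enough \( {\bf Pr}(|L_n - {\bf E}L_n| \geq n^\alpha) \leq \exp(-n^\beta)\). The next step is to pass from increasing to decreasing subsequences: the map \( \sigma \mapsto \sigma w_0\), where \( w_0 = n(n-1)\cdots 1\) is the order-reversing permutation, is a bijection of \( {\mathcal S}_n\) that carries increasing subsequences of \( \sigma\) to decreasing subsequences of \( \sigma w_0\) of the same length, so \( \width\) has the same distribution under the uniform measure on \( {\mathcal S}_n\) as \( L_n\) (this is the remark already made in the text). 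Consequently, all but at most \( n!\exp(-n^\beta)\) permutations \( \sigma \in {\mathcal S}_n\) satisfy \( \width(\sigma) \leq {\bf E}L_n + n^\alpha\).

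The second step bounds this width. By Theorem~\ref{mean_width_random_permutation}, \( {\bf E}L_n = (2 + o(1))\sqrt{n}\), and since \( \alpha < \frac{1}{2}\) we have \( n^\alpha = o(\sqrt{n})\); hence every \( \sigma\) in the above family of ``good'' permutations satisfies \( \width(\sigma) \leq k_0\), where \( k_0 := \lceil {\bf E}L_n + n^\alpha \rceil = (2 + o(1))\sqrt{n}\). Crucially \( k_0\) depends only on \( n\) and not on \( \sigma\), so a single width bound governs the entire family.

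The third step applies the algorithm. For \( n\) large, \( k_0 \geq 3\), so by Theorem~\ref{bounded_width_permutation_from_identity} the quantity \( |[\id,\sigma]|\) can be computed, for every good \( \sigma\), in time \( O(k_0^2 n^{k_0})\). Writing this bound as \( \exp(k_0 \log n + 2\log k_0 + O(1))\) and using \( \log k_0 = O(\log n) = o(\sqrt{n}\log n)\), the exponent equals \( k_0 \log n + o(\sqrt{n}\log n) = (2 + o(1))\sqrt{n}\log n\), so the running time is \( e^{(2+o(1))\sqrt{n}\log n}\) uniformly over the good permutations, which yields the claim (with the same \( \beta\) as produced by Frieze).

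I do not expect a genuine obstacle here; the only points needing care are bookkeeping ones: confirming that the reversal bijection preserves the uniform measure and swaps the two kinds of monotone subsequences, and checking that the polynomial overhead \( k_0^2\), the rounding in the definition of \( k_0\), and the gap between \( \max(3,k_0)\) and \( k_0\) are all absorbed into the \( o(1)\) in the exponent. It is also worth noting that the two-dimensional poset machinery of the earlier sections is not invoked directly here: Theorem~\ref{bounded_width_permutation_from_identity}, which already bundles Lemmas~\ref{Bruhat_intervals_from_identity} and~\ref{width} with Corollary~\ref{bounded_width_modified}, does all of the combinatorial work, and only the probabilistic inputs of this section need to be supplied.
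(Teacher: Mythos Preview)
Your proposal is correct and follows essentially the same route as the paper: fix an \(\alpha\) in \((\tfrac13,\tfrac12]\), invoke Frieze's concentration (Theorem~\ref{width_concentration}) together with the Vershik--Kerov/Logan--Shepp asymptotic (Theorem~\ref{mean_width_random_permutation}) to conclude that all but \(n!\exp(-n^\beta)\) permutations have width \((2+o(1))\sqrt{n}\), and then feed this uniform width bound into Theorem~\ref{bounded_width_permutation_from_identity} to obtain the \(e^{(2+o(1))\sqrt{n}\log n}\) running time. Your handling of the lower-order terms (keeping \({\bf E}L_n\) explicit rather than replacing it by \(2\sqrt{n}\), and isolating a single \(k_0\) that depends only on \(n\)) is in fact a bit tidier than the paper's, but the argument is the same.
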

\begin{proof}
Fix \( \frac { 1 } { 3 } < \alpha \leq \frac { 1 } { 2 }\). Then there
exists \( \beta > 0\) satisfying the conclusion of Theorem
\ref{width_concentration}. By using Theorem
\ref{mean_width_random_permutation} and selecting a sufficiently large
\( n\), we have
\begin{equation*}
\frac { | \{ \sigma \in { \mathcal S }_n : | \width ( \sigma ) - 2 \sqrt
{ n } | \geq n^\alpha \} | } { n ! } \leq \exp ( - n^\beta ).
\end{equation*}
Rearranging,
\begin{equation*}
| A | \geq n ! ( 1 - \exp ( - n^\beta ) ) ,
\end{equation*}
where
\begin{equation*}
A = \{ \sigma \in { \mathcal S }_n : | \width ( \sigma ) - 2 \sqrt { n }
| < n^\alpha \}.
\end{equation*}
 
For each \( \sigma \in A\), \( \width ( \sigma ) < 2 \sqrt { n } +
n^\alpha\). By Theorem \ref{bounded_width_permutation_from_identity},
we can compute \( | [ \id, \sigma ] |\) in time \( O ( ( 2 \sqrt { n }
+ n^\alpha )^2 n^{ \max ( 3 , 2 \sqrt { n } + n^\alpha ) } )\). Since
\( \alpha \leq \frac { 1 } { 2 }\) ,
\begin{equation*}
( 2 \sqrt { n } + n^\alpha )^2 n^{ \max ( 3 , 2 \sqrt { n } +
n^\alpha ) } = O ( n^{ 2 \sqrt { n } + n^\alpha + 1 } ) \leq e^{ ( 2 + o ( 1 ) ) \sqrt { n } \log n}.
\end{equation*}
Hence, for each \( \sigma \in A\), there is an algorithm to compute \(
| [ \id , \sigma ] |\) with the claimed time complexity.

\end{proof}

Of course, the previous theorem can be recast in terms of dimension two
posets, demonstrating that, for large enough \( n\), most
two-dimensional posets with \( n\) elements have a sub-exponential time
algorithm which computes the number of linear extensions.
\begin{corollary}
There exists \( \beta > 0\) such that, for \( n\) sufficiently large,
there are \( n ! ( 1 - \exp ( - n^\beta ) )\) two-dimensional 
naturally-labeled posets such that the number of linear
extensions can be computed in time \( e^{ ( 2 + o ( 1 ) ) \sqrt { n } \log n
}\).
\end{corollary}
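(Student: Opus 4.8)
The plan is to transport Theorem~\ref{most_permutations_have_sub_exponential_algorithm} across the bijection \( \Phi \) between \( { \mathcal S }_n \) and \( { \mathcal P }_n \). First I would recall that \( \Phi : { \mathcal S }_n \to { \mathcal P }_n \) is a bijection, so the set of two-dimensional naturally-labeled posets on \( [n] \) has cardinality exactly \( n! \), and every such poset \( P \) equals \( \Phi ( \sigma ) \) for a unique \( \sigma = \Phi^{-1}(P) \), which can be read off from a realizer (or from the graph-of-a-function picture) of \( P \) in polynomial time. By Lemma~\ref{Bruhat_intervals_from_identity}, \( | { \mathcal L } ( P ) | = | [ \id , \Phi^{-1}(P) ] | \), so counting linear extensions of \( P \) reduces, at polynomial cost, to computing the size of an interval of the form \( [ \id , \sigma ] \).

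Next I would invoke Theorem~\ref{most_permutations_have_sub_exponential_algorithm}: there is a \( \beta > 0 \) and a set \( A \subseteq { \mathcal S }_n \) with \( | A | \geq n!\,( 1 - \exp ( - n^\beta ) ) \) such that for every \( \sigma \in A \) the quantity \( | [ \id , \sigma ] | \) can be computed in time \( e^{ ( 2 + o ( 1 ) ) \sqrt{n} \log n } \). Setting \( { \mathcal Q } = \Phi ( A ) \subseteq { \mathcal P }_n \), bijectivity of \( \Phi \) gives \( | { \mathcal Q } | = | A | \geq n!\,( 1 - \exp ( - n^\beta ) ) \). For each \( P \in { \mathcal Q } \), the algorithm computes \( \sigma = \Phi^{-1}(P) \in A \) in polynomial time and then applies the algorithm of Theorem~\ref{most_permutations_have_sub_exponential_algorithm} to obtain \( | [ \id , \sigma ] | = | { \mathcal L } ( P ) | \); the polynomial overhead of the first step is absorbed into the \( o(1) \) in the exponent, since \( \mathrm{poly}(n) \cdot e^{ ( 2 + o ( 1 ) ) \sqrt{n} \log n } = e^{ ( 2 + o ( 1 ) ) \sqrt{n} \log n } \).

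There is essentially no obstacle here — the corollary is a direct translation of the permutation statement — but two small points deserve a line of care. The first is that the count \( n!\,(1 - \exp ( - n^\beta ) ) \) is to be read (exactly as in Theorem~\ref{most_permutations_have_sub_exponential_algorithm}) as a lower bound on an integer cardinality, not as a literal integer. The second is that recovering \( \Phi^{-1}(P) \) from the input poset \( P \) is genuinely polynomial-time and hence negligible against \( e^{ ( 2 + o ( 1 ) ) \sqrt{n} \log n } \). I would close by noting that the exceptional \( \exp(-n^\beta) \) fraction of posets here corresponds exactly, under \( \Phi \), to the permutations \( \sigma \) whose width deviates from \( 2\sqrt{n} \) by at least \( n^\alpha \), i.e.\ to the bad set in Theorem~\ref{width_concentration}, so any improvement of the exceptional fraction by this route would have to come from a sharper large-deviation bound on \( \width \) of a random permutation.
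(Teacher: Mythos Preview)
Your proposal is correct and matches the paper's approach: the paper states this corollary without proof, simply noting that Theorem~\ref{most_permutations_have_sub_exponential_algorithm} ``can be recast in terms of dimension two posets,'' which is precisely the transport across the bijection \(\Phi\) that you spell out. Your added remarks about polynomial overhead and the interpretation of \(n!\,(1-\exp(-n^\beta))\) are sound and make explicit details the paper leaves implicit.
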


\bibliographystyle{plain}
\bibliography{sources}
\end{document}